\documentclass{amsart}
\usepackage[utf8]{inputenc}
\usepackage{color}
\usepackage[english]{babel}
\usepackage{amsmath} 
\usepackage{amsfonts}
\usepackage{amssymb}
\usepackage{setspace}
\usepackage[utf8]{inputenc}
\usepackage[T1]{fontenc}
\usepackage{graphicx}
\usepackage{amsthm}
\usepackage{mathrsfs}
\usepackage{bm}
\usepackage[all]{xy}
\usepackage{lscape}
\usepackage{latexsym}
\usepackage{epigraph}
\usepackage[Sonny]{fncychap}
\usepackage{amsfonts}
%\linespread{1.2}

\theoremstyle{plain}
\newtheorem{theorem}{Theorem}
\newtheorem{proposition}{Proposition}
\newtheorem{lemma}{Lemma}
\newtheorem{corollary}{Corollary}

\theoremstyle{definition}
\newtheorem{definition}{Definition}

\theoremstyle{remark}
\newtheorem{remark}{Remark}
\definecolor{darkred}{rgb}{1,0,0} %can change the intensity in [0,1]
\definecolor{darkgreen}{rgb}{0,1,0}
\definecolor{darkblue}{rgb}{0,0,1}

\newcommand{\A}{\mathbb{A}}
\newcommand{\K}{\mathfrak{k}}
\newcommand{\B}{\mathcal{B}}

\definecolor{darkred}{rgb}{1,0,0} %can change the intensity in [0,1]
\definecolor{darkgreen}{rgb}{0,0.8,0}
\definecolor{darkblue}{rgb}{0,0,1}

\begin{document}

\title{Meridional rank of knots whose exterior is a graph manifold} 

\author{Michel Boileau}\thanks{First author partially supported by ANR
projects 12-BS01-0003-01 and 12-BS01-0004-01\\ Second author supported by CAPES  grant 13522-13-2}
\address{Aix-Marseille Universit\'e, CNRS, Centrale Marseille, I2M, UMR 7373,
13453 Marseille, France}
\email{michel.boileau@univ-amu.fr}

\author{Ederson Dutra}
\address{Mathematisches Seminar, Christian-Albrechts-Universität zu Kiel, Ludewig-Meyn Str.~4, 24098 Kiel,
Germany}
\email{dutra@math.uni-kiel.de}

\author{Yeonhee Jang}
\address{Department of Mathematics, Nara Women's University, Nara, 630-8506 Japan}
\email{yeonheejang@cc.nara-wu.ac.jp}

\author{Richard Weidmann}
\address{Mathematisches Seminar, Christian-Albrechts-Universität zu Kiel, Ludewig-Meyn Str.~4, 24098 Kiel,
Germany}
\email{weidmann@math.uni-kiel.de}

\begin{abstract} We prove for a large  class of knots that the meridional rank coincides with the bridge number. This class contains all knots whose exterior is a graph manifold. This gives a partial answer to a question of S. Cappell and J. Shaneson \cite[pb 1.11]{Kirby}.
\end{abstract}

\maketitle

Let $\K$ be a knot in $S^3$. It is well-known that the knot group   of $\K$ can be generated by   $b(\K)$ conjugates of the meridian where $b(\K)$ is the bridge number   of $\K$. The meridional rank $w(\K)$ of $\K$ is the smallest number of conjugates of the meridian that generate its  group. Thus we always have $w(\K)\le b(\K)$. It was asked by S. Cappell and J. Shaneson \cite[pb 1.11]{Kirby}, as well as by K. Murasugi,  whether the opposite inequality always holds, i.e. whether $b(\K)=w(\K)$ for any knot $\K$. To this day no counterexamples are known but the equality has been verified in a number of cases: 
\begin{enumerate}
\item For generalized Montesinos knots this is due to Boileau and Zieschang \cite{Boi3}.
\item For torus knots this is a result of Rost and Zieschang \cite{RZ}.
\item  The case of knots of meridional rank $2$ (and therefore also knots with bridge number $2$) is due to Boileau and Zimmermann~\cite{Boi6}.
\item For a class of knots also refered to as generalized Montesinos knots, the equality is due to Lustig and Moriah \cite{LM}.
\item For some iterated cable knots this is due to Cornwell and Hemminger \cite{CH}.
\item For knots of meridional rank $3$ whose double branched cover is a graph manifold the equality can be found in \cite{BJW}.
\end{enumerate}

%-----------------------------meridional subgroup--------------------------------------------------

The \textit{knot space} of $\K$ is defined as $X(\K):=\overline{S^3-V(\K)}$ where $V(\K)$ is a regular neighborhood of $\K$ in $S^3$. The \textit{knot group} $G(\K)$ of $\K$ is the fundamental group of $X(\K)$. We further denote by $P(\K)\leq G(\K)$ the peripheral subgroup of $\K$, i.e., $P(\K)=\pi_1\partial X(\K)$. 

Let $m\in P({\K})$ be  the meridian of $\K$, i.e. an element of $G(\K)$ which can be represented    by a simple closed curve on $\partial X(\K)$ that bounds a disk in $S^3$ which intersects $\K$ in exactly one point.  In the sequel we refer to  any conjugate of $m$  as a meridian of $\K$.

We call a subgroup $U\leq G(\K)$ \textit{meridional}  if $U$ is generated by finitely many  meridians of $\K$. The minimal number of meridians needed to generated $U$, denoted by $w(U)$, is called the  \textit{meridional rank} of $U$.  Observe that the knot group $G(\K)$ is meridional and its meridional rank is equal to   $w(\K)$.

A meridional subgroup $U$ of meridional rank $w(U)=l$ is called \textit{tame} if for any $g\in G(\K)$  one of the following holds:
\begin{enumerate}
\item  $gP(\K)g^{-1}\cap U=1$.
\item $gP(\K)g^{-1}\cap U=g\langle m \rangle g^{-1}$ and there exists meridians $m'_2,\ldots,m'_l$ such that   $U$ is generated by $\{gmg^{-1},m'_2,\ldots,m'_l\}$.
\end{enumerate}

%------------------------------------------------------------------

\begin{definition}
A non-trivial knot $\K$ in $S^3$ is called \textit{meridionally tame} if any meridional subgroup $U\leq G(\K)$ generated by less than $b(\K)$ meridians  is tame.
\end{definition} 

\begin{remark}
If $\K$ is  meridionally tame, then its group cannot be generated by less than $b(\K)$ meridians. Hence the bridge number equals the meridional rank. Thus the question of  Cappell and Shaneson has a positive answer for the class of meridionally tame knots by definition of meridional tameness.
\end{remark}

The class of meridionally tame knots trivially contains the class of 2-bridge knots as any cyclic meridional subgroup is obviously tame.  In Lemma \ref{lem:torus-knots} below   we show that the meridional tameness of torus knots is implicit in \cite{RZ} and in Proposition~\ref{3bridge} we show that prime 3-bridge knots are meridionally tame. However it follows from \cite{Ag} and the discussion at the end of Section~\ref{sec_3bridge} that satellite knots are in general not meridionally tame. For examples Whitehead doubles of  non-trivial knots are never meridionally tame. These are prime satellite knots, whose satellite patterns have winding number zero, which is opposite to the braid patterns considered in this article. Moreover Whitehead doubles of 2-bridge knots are prime 
4-bridge knots for which the question of  Cappell and Shaneson has a positive answer by \cite[Corollary 1.6]{BJW}. There exists also connected sums of meridionally tame knots (for examples some 2-bridge knots) which are not meridionally tame. In contrast, it should be noted that we do not know any hyperbolic knots that are not meridionally tame, but it is likely that such knots exist.

\medskip In this article we consider the class of knots $\mathcal K$ that is the smallest class of knots that contains all meridionally tame knots and is closed under connected sums and satellite constructions with braid patterns, see Section~\ref{knotsinC} for details. The following result is our main theorem:

\begin{theorem}\label{main} Let $\K$ be a knot  from $\mathcal K$. Then $w(\K)=b(\K)$.
\end{theorem}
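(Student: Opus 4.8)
The plan is to prove Theorem~\ref{main} by structural induction on the construction of the class $\mathcal{K}$. Since $\mathcal{K}$ is defined as the smallest class containing all meridionally tame knots and closed under connected sums and satellite constructions with braid patterns, I would establish three things: a base case for meridionally tame knots, and two inductive steps corresponding to the two closure operations. Let me think about what each piece requires.

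Base case: For meridionally tame knots the equality $w(\K)=b(\K)$ is immediate from the Remark in the excerpt. Indeed, if $\K$ is meridionally tame then by definition every meridional subgroup generated by fewer than $b(\K)$ meridians is tame; since $G(\K)$ itself cannot be a proper tame subgroup of this kind (a tame subgroup of meridional rank $l<b(\K)$ would, via condition (2), force the peripheral structure to be generated by fewer meridians than the bridge number permits), we conclude $w(\K)\geq b(\K)$, and combined with the always-valid $w(\K)\leq b(\K)$ we get equality.

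Inductive steps: The two operations must be shown to preserve the equality. For connected sums the key input is the additivity of the bridge number under connected sum, namely $b(\K_1\#\K_2)=b(\K_1)+b(\K_2)-1$ (Schubert), and a matching additivity or superadditivity statement for meridional rank of the form $w(\K_1\#\K_2)\geq w(\K_1)+w(\K_2)-1$. Combined with the obvious subadditivity $w(\K_1\#\K_2)\leq w(\K_1)+w(\K_2)-1$ coming from generating sets, this yields the desired equality once the inductive hypothesis gives $w(\K_i)=b(\K_i)$. The lower bound on meridional rank is where the genuine group-theoretic work lies, and I expect this to require a careful analysis of how meridional subgroups interact with the free-product-with-amalgamation (or graph-of-groups) decomposition of $G(\K_1\#\K_2)$ along the connecting annulus, likely using an accessibility or Nielsen-type argument for meridional generating sets.

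For the satellite construction with braid patterns the analogous bridge-number formula relates $b$ of the satellite to the braid index of the pattern times $b$ of the companion, and the heart of the argument will be transporting the meridional-rank lower bound through the JSJ/graph-manifold-like decomposition of the satellite exterior, controlling meridional subgroups of the companion group inside the satellite group. The hard part, and the main obstacle, will be this satellite step: one must show that a meridional generating set of the satellite group, when intersected with the peripheral and companion subgroups under the graph-of-groups decomposition, descends to meridional generating sets of the pieces of the expected sizes, so that the inductive hypothesis applies. This requires the tameness condition to be preserved or exploited across the decomposition, and I expect it to rely on a Grushko/Nielsen reduction adapted to the graph-of-groups structure (in the spirit of Weidmann's work on ranks of groups acting on trees), precisely to rule out the possibility that conjugation collapses generators and lowers the count below $b(\K)$.
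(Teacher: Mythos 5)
Your overall architecture --- structural induction on the two closure operations, with the base case handled by meridional tameness --- has a genuine gap in the inductive step, and it is precisely the gap the paper is organized to avoid. To run the connected-sum step you need the superadditivity $w(\K_1\sharp\K_2)\ge w(\K_1)+w(\K_2)-1$, and for the satellite step the analogous multiplicative lower bound; but additivity of meridional rank under connected sum is not a known theorem (it is essentially as open as the Cappell--Shaneson question itself), and the inductive hypothesis $w(\K_i)=b(\K_i)$ is far too weak to derive it: knowing the \emph{number} of meridians needed to generate $G(\K_i)$ tells you nothing about how a meridional subgroup of the composite sits relative to the amalgamating peripheral subgroups, which is what any Nielsen/Grushko-type reduction along the swallow-follow torus or JSJ tori actually requires. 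The natural repair --- strengthening the inductive invariant from ``$w=b$'' to ``meridionally tame'' --- fails because meridional tameness is \emph{not} preserved by either operation: the paper points out explicitly that some connected sums of $2$-bridge knots are not meridionally tame, and that Whitehead doubles (satellites) of non-trivial knots are never meridionally tame. So the induction does not close up under either formulation.

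The paper's proof abandons the step-by-step induction entirely. It encodes the whole construction in a single labeled rooted tree $\mathcal A$, realizes $G(\K_{\mathcal A})$ as the fundamental group of a tree of groups whose vertex groups are the groups of the meridionally tame leaf knots, braid-space groups $F_n\rtimes\mathbb Z$, and composing-space groups $F_n\times\mathbb Z$, and then runs one global folding argument in the language of $\mathbb A$-graphs: a meridional generating set of size $l<b(\K)$ gives a $\pi_1$-surjective tame $\mathbb A$-graph of complexity $c_1=l$, folds preserve tameness and never increase the complexity, and the folded terminal object is forced to have $c_1=b(\K)$ by Schubert's formula (Lemma~\ref{bridgenumber}), a contradiction. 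Crucially, meridional tameness is invoked only \emph{locally}, at the $V_0$-vertices, while the braid and composing vertex groups are controlled by explicit covering-space computations in punctured-sphere groups (Lemma~\ref{L2}, Corollaries~\ref{C1} and~\ref{C3}) rather than by any tameness inherited from the companion pieces. You correctly anticipated that a Weidmann-style folding/Nielsen argument over the graph of groups is the engine; the missing idea is that this engine must be run once over the entire tree with a carefully designed complexity, rather than operation by operation.
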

 
As the only Seifert manifolds that can be embedded into $\mathbb R^3$ are torus knot complements, composing spaces and cable spaces (see \cite{JS}) and as cable spaces are special instances of braid patterns we immediately obtain the following consequence of Theorem~\ref{main}.
 
\begin{corollary}\label{CJSJ}
Let $\K$ be a knot such that its exterior is a graph manifold. Then $$w(\K)=b(\K).$$
\end{corollary}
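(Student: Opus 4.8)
The plan is to deduce Corollary~\ref{CJSJ} from Theorem~\ref{main} by proving that every knot whose exterior is a graph manifold already belongs to the class $\mathcal K$. Recall that $X(\K)$ being a graph manifold means precisely that every piece of its JSJ decomposition along essential tori is Seifert fibered. Since each such piece is a submanifold of $S^3$, it embeds in $\mathbb R^3$, so by the classification of Seifert fibered spaces that embed in $\mathbb R^3$ (see \cite{JS}) each piece is a torus knot complement, a composing space, or a cable space. The key observation is that this trichotomy matches the three ways knots in $\mathcal K$ are built: a torus knot complement corresponds to a torus knot, which is meridionally tame by Lemma~\ref{lem:torus-knots} and hence lies in $\mathcal K$; a composing space realizes a connected sum; and a cable space is a braid pattern, so it realizes a satellite construction with a braid pattern in the sense of Section~\ref{knotsinC}.

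I would argue by induction on the number $N$ of pieces in the JSJ decomposition of $X(\K)$. For the base case $N=1$ the exterior $X(\K)$ is a single Seifert piece with exactly one boundary torus. Since composing spaces have at least three boundary tori and cable spaces have two, the only possibility from the classification is that $X(\K)$ is a torus knot complement. Thus $\K$ is a torus knot, which is meridionally tame and therefore lies in $\mathcal K$ (the unknot, whose exterior is a solid torus, gives $w(\K)=b(\K)=1$ trivially).

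For the inductive step, root the JSJ tree at the piece $B$ containing the boundary torus $\partial X(\K)$; by the classification, $B$ is of one of the three types. If $B$ is a composing space with $n+1$ boundary tori, then cutting along the $n\ge 2$ separating tori exhibits $\K$ as a connected sum $\K_1\#\cdots\#\K_n$, where each $X(\K_i)$ is the union of the JSJ pieces lying beyond the corresponding torus and is therefore again a graph manifold, now with strictly fewer pieces. If $B$ is a cable space, then the single torus separating $B$ from the rest of $X(\K)$ is the companion torus, so $\K$ is a cable of a companion $\K'$, i.e. a satellite of $\K'$ with a braid pattern, and $X(\K')$ is the union of the remaining pieces, again a graph manifold with fewer pieces. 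In either case the induction hypothesis gives $\K_i\in\mathcal K$ (respectively $\K'\in\mathcal K$), and since $\mathcal K$ is closed under connected sums and under satellite constructions with braid patterns, we conclude $\K\in\mathcal K$. Applying Theorem~\ref{main} then yields $w(\K)=b(\K)$.

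The only point requiring care is the translation between the JSJ data and the recursive description of $\mathcal K$: one must check that a composing space at the root genuinely produces a connected sum decomposition, and that at each stage the companion exterior and the summand exteriors are themselves knot exteriors in $S^3$ which are graph manifolds, so that the recursion is well founded and strictly decreases $N$. This amounts to identifying, for each piece, which boundary torus points toward $\partial X(\K)$ and verifying that the region inside each JSJ torus is again a knot exterior; this is exactly the classical satellite/companion structure, and once it is invoked the closure properties of $\mathcal K$ make the conclusion immediate—consistent with the word "immediately" in the statement. The genuine work of the paper lies in Theorem~\ref{main} and in the verification, recorded in Lemma~\ref{lem:torus-knots} and the analysis of \cite{JS}, that torus knots are meridionally tame and that the embeddable Seifert pieces are exactly these three types.
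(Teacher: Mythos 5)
Your proposal is correct and follows essentially the same route as the paper, which deduces the corollary from Theorem~\ref{main} by citing the classification of Seifert pieces embeddable in $\mathbb{R}^3$ (torus knot complements, composing spaces, cable spaces), observing that cable spaces are special braid patterns and composing spaces realize connected sums, and using Lemma~\ref{lem:torus-knots} for the meridional tameness of the torus knots at the leaves. Your explicit induction on the number of JSJ pieces merely spells out what the paper leaves as "immediate."
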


In 2015  the first, third and fourth author found a proof that the meridional rank coincides with the bridge number for the easier case of knots obtained from torus knots by satellite operations with braid patterns. In the meantime the second and fourth authors introduced the notion of meridional tameness, and the second author was able to generalize the result to the broader class of knots 
$\mathcal K$ by proving Theorem~\ref{main}. Some basic features of the original proof are preserved, but the proof of Theorem~\ref{main} is much more involved and subtle,  in particular because of the presence of composing spaces.

%-------------------------------------------------------------------------

\section{Description of the class $\mathcal K$. }\label{knotsinC}

In this section we introduce the appropriate formalism to study knots that lie in the class $\mathcal K$ introduced in the introduction. Recall that these knots are obtained from meridionally tame knots by repeatedly taking connected sums and performing satellite constructions with braid patterns.\color{black} 
 
We first  discuss satellite construction with braid patterns. This generalizes the well-known cabling construction. Satellite construction with braid patterns are also discussed in \cite{CH}.

\begin{figure}[h] 
\begin{center}
\includegraphics[scale=1]{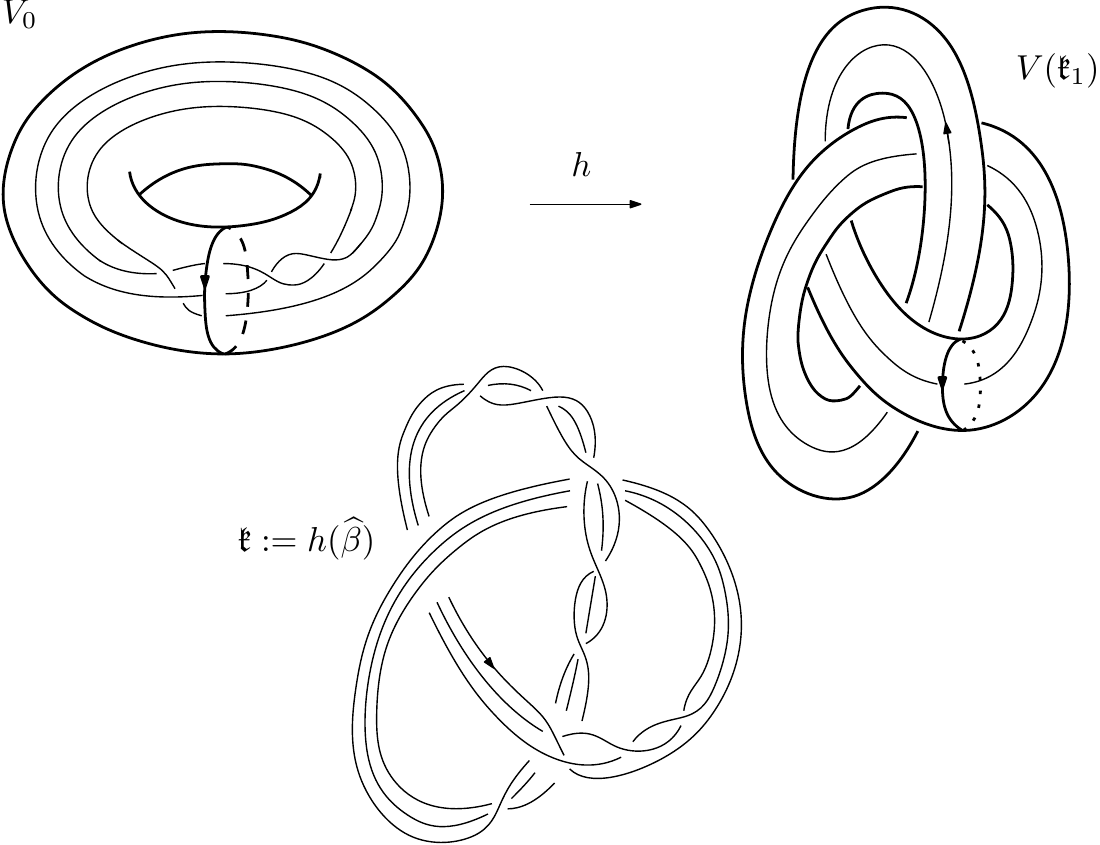}
\end{center}
\caption{$\beta$ is the $3$-braid $\sigma_2{\sigma_1}^{-1}\sigma_2^2$ and $\K_1$ is the trefoil knot.}\label{Fig1}
\end{figure}

\smallskip Let $V_0=\mathbb{D}^{2}\times S^1$ be a standardly embedded  solid torus in ${\mathbb{R}}^3$ and $\beta$ an $n$-braid with $n\geq 2$. Assume  that the closed braid $\widehat{\beta}$ is standardly  embedded in the interior of $V_0$,  see  Figure~\ref{Fig1}. For a  knot    $\K_1\subset S^3$ and a homeomorphism $h:V_0\rightarrow V(\K_1)$   onto a tubular neighborhood of   $\K_1$ which sends a meridian  of $V_0$ onto a meridian of $\K_1$ and the longitude of $V_0$ (i.e. the simple closed curve on $\partial V_0$ that is nullhomotopic in the complement of the interior of $V_0$) to the longitude of $\K$,  we  define the link $\K:=h(\widehat{\beta})$, see Figure \ref{Fig1}. Note that $\K$ is a knot if and only if  the associated permutation $\tau\in S_n$ of $\beta$ is a cycle of length $n$. In this case the knot $\K$ is called   a $\beta$-\textit{satellite} of $\K_1$ and $\K$ is denote by $\beta(\K_1)$. We also say that $\K$ is obtained from $\K_1$ by a \textit{satellite operation with braid pattern}  $\beta$ .

{\smallskip In order to define the class $\mathcal{K}$ we need some terminology.}

\smallskip Let $A$ be a finite tree. A subset $E\subset EA$ is called an \textit{orientation} of $A$ if $EA=E\overset{.}{\cup}E^{-1}$. For a finite rooted  tree $(A,v_0)$ we will define a natural orientation   determined by the root $v_0$ in the following way: for each vertex $v\in VA$ there exists a unique reduced path  $\gamma_v:=e_{v,1},\ldots,e_{v,r_v}$  in $A$  from $v_0$ to $v$. We define
$$E(A,v_0)=\{e_{v,i} \  \vert \  v\in VA, 1\leq i \leq r_v\}.$$  
Throughout this  paper we assume that any   rooted tree $(A,v_0)$  is endowed with this orientation.

\smallskip Suppose that $E$ is some orientation for $A$.   We say that a path $e_1,\ldots,e_k$ in $A$ is   \textit{oriented} if $e_i\in E$ for $1\le i\le k$. For any vertex  $v\in VA$    we define    $A(v)$ as the sub-tree of $A$ spanned by the set 
$$VA(v)=\{ \omega(\gamma) \ | \ \gamma \text{ is an oriented path in } A \text{  and } \alpha(\gamma)=v\}.$$

\smallskip Note that for any vertex $w\in VA$ we have  $$E(A(w),w)=E(A,v_0)\cap EA(w)$$ where we consider the canonical orientations defined above, see Figure~\ref{fig:rootedtree}.

\begin{figure}[h!]
\begin{center}
\includegraphics[scale=1]{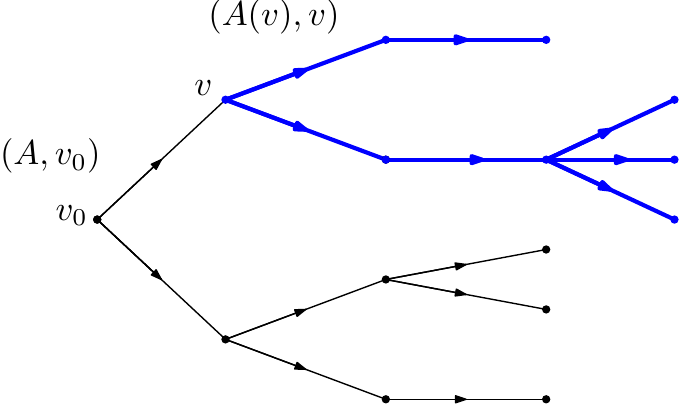}
\end{center}
\caption{A rooted tree $(A,v_0)$ and a rooted subtree $(A(w),w)$ (fat blue lines) with their canonical orientations. }\label{fig:rootedtree}
\end{figure}

Recall that for any vertex $v\in VA$, the \textit{star} of $v$ is defined as 
$$St(v,A)=\{e\in EA  \ | \ \alpha(e)=v\}.$$ The cardinality $|St(v,A)|$  of $St(v,A)$ is called the \textit{valence} of $v$, written $val(v,A)$. If $A$ is assigned some orientation $E\subset EA$  we  further  define the \textit{positive star} of $v$ as  $$St_{+}(v,A)=St (v,A)\cap E $$ and the \textit{positive valence } of $v$ as   $val_{+}(v,A)=|St_+(v,A)|$.

For $i\in \{0,1\}$ we set  
$$V_i:=\{v\in VA \ | \ val_{+}(v,A)=i\}$$
and we also define  
$$V_2:=\{v\in VA \ \vert \ val_{+}(v,A)\geq 2\}.$$  

We now define labelings of rooted trees.  A labeled rooted tree is a tuple 
$$\mathcal{A}=((A,v_0),\{\K_v \ | \ v\in V_0 \}, \{\beta_v \ | \ v\in V_1\})$$ 
such that the following hold:
\begin{enumerate}
\item $(A,v_0)$ is a finite rooted tree.
\item For any $v\in V_0$, $\K_v$ is a non-trivial knot.
\item For any $v\in V_1$,   $\beta_v$ is an   $n_v$-braid having $n_v\geq 2$ strands  such that the closed braid $\widehat{\beta_{v}}$ is a knot.
\end{enumerate}

For $w\in VA$ we define the labeled rooted tree $\mathcal{A}_w$ as
$$\mathcal{A}_w:=((A(w),w),\{\K_v \ | \ v\in V_0\cap VA(w)\},\{\beta_v \ | \ v\in V_1\cap VA(w)\}).$$

We now associate to any labeled rooted tree $\mathcal{A}$   a knot   $\K=\K_{\mathcal{A}}\subset S^3$. We define  $l(\mathcal A):=max\{ d(v,v_0) \ \vert \ v\in VA\}$. We recursively define  $\K_{\mathcal{A}}$ in the following way: 
\begin{enumerate} 
\item If $l(\mathcal A)=0$, then we define $\K_{\mathcal{A}}=\K_{v_0}$.
\item  If  $l(\mathcal A)>0$  and $val(v_0,A)=1$, then we define $\K_{\mathcal{A}}:=\beta_{v_0}(\K_{\mathcal{A}_{v_1}})$ where $v_1\in VA$ is the unique vertex of $A$  such that $d(v_1,v_0)=1$.
\item  If  $l(\mathcal A)>0$  and $val(v_0,A)\ge 2$  then  we define  $\K_{\mathcal{A}}:=\overset{d}{\underset{i=1}{\sharp}}\K_{\mathcal{A}_{v_i}}$  where $v_1,\ldots,v_d\in VA$ such that $d(v_i,v_0)=1$. 
\end{enumerate}
 
Note that this is a (recursive) definition indeed as in situation (2) and (3) we have $l(\mathcal {A}_{v_i})<l({\mathcal A})$ for all occurring $i$. Note moreover that $\K_{\mathcal A}$ lies in the class $\mathcal K$ if and only if $\K_v$ is meridionally tame for any $v\in V_0$. Thus we can rephrase the main theorem in the following way:

\begin{theorem}\label{main2} Let $\mathcal A$ be a labeled rooted tree and $ \K=\K_{\mathcal{A}}$. Suppose that $\K_v$  is meridionally tame for all $v\in V_0$. Then $$b(\K)=w(\K).$$
\end{theorem}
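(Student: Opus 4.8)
Since every knot group is generated by $b(\K)$ meridians, we always have $w(\K)\le b(\K)$, so the entire content of the theorem is the reverse inequality $w(\K)\ge b(\K)$. The plan is to prove this by induction on the labeled tree $\mathcal A$, using as base case the meridional tameness of the leaf knots $\K_v$, $v\in V_0$ (for which $b=w$ by the Remark), and as inductive engine the action of $G(\K)$ on the Bass–Serre tree of a graph of groups modelled on $A$.

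First I would record the behaviour of the bridge number under the two building operations, so as to know the target value. Schubert's additivity gives $b(\sharp_{i=1}^{d}\K_i)=\sum_{i=1}^{d}b(\K_i)-(d-1)$, and the bridge number of a satellite with an $n$-braid pattern is $b(\beta(\K_1))=n\cdot b(\K_1)$ (the wrapping number of $\widehat\beta$ in the pattern solid torus is exactly $n$, and the Schubert–Schultens lower bound is attained by running $n$ braided parallel copies of a minimal bridge presentation of $\K_1$). Iterating these two formulas along $A$ expresses $b(\K)$ as an explicit function of the data $(\{b(\K_v)\},\{n_v\})$, and the goal becomes to bound $w(\K)$ below by this same number.

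The heart of the argument is geometric–group-theoretic. The torus decomposition of $X(\K)$ realizes $G(\K)$ as the fundamental group of a graph of groups $\mathbb G$ whose underlying graph refines $A$: leaf vertices carry the meridionally tame groups $G(\K_v)$, valence-one vertices carry the group of the exterior of the closed braid $\widehat{\beta_v}$ in the pattern solid torus, branch vertices carry composing-space groups (a planar-surface group times $\mathbb Z$), and every edge group is a peripheral $\mathbb Z^2$ generated by a meridian–longitude pair. Given a generating set of $k$ meridians, I would study the induced action on the Bass–Serre tree $T$ of $\mathbb G$ and apply the Nielsen/folding theory for groups acting on trees (Weidmann's generalization of the Grushko–Nielsen method) to put the generating set into a reduced form adapted to $T$. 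In reduced form the generators distribute among the vertex groups; tameness of the leaf knots guarantees that the meridians landing in a leaf group $G(\K_v)$ either generate a tame, rank-respecting subgroup or already force $b(\K_v)$ meridians there, while the multiplicative factors $n_v$ appear because an incident edge group $\mathbb Z^2$ meets an adjacent meridional subgroup only in a rank-one peripheral subgroup, so passing a braid vertex multiplies the number of required meridians by $n_v$. Summing the vertex contributions along $A$ reproduces exactly the bridge-number formula of the previous paragraph, yielding $k\ge b(\K)$ and hence $w(\K)\ge b(\K)$.

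The main obstacle will be the composing spaces at the branch vertices, as the authors themselves flag. Unlike a braid vertex, a composing-space group does not interact with its incident edge groups through a single meridian, and connected sums of meridionally tame knots are in general \emph{not} meridionally tame, so one cannot simply induct on the property ``meridionally tame.'' The delicate point is therefore to show that the reduced form of a meridional generating set still respects the peripheral $\mathbb Z$'s at a composing vertex — that the folding process neither merges nor destroys meridians across the amalgamation over $\langle m\rangle$ — so that the deficit $-(d-1)$ in Schubert's formula is matched exactly and no meridian is double-counted. Carrying out this bookkeeping, while simultaneously preserving enough of the tameness hypothesis to feed the induction at the next level down, is where essentially all of the work lies.
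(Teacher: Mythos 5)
Your plan is essentially the paper's own proof: the bridge number is computed by iterating Schubert's formulas along the tree (Lemma~\ref{bridgenumber}), and the lower bound on $w(\K)$ is obtained by realizing a meridional generating set as an $\A$-graph over the JSJ graph of groups and running Weidmann-style folding, with a ``tameness'' invariant on $\A$-graphs and a two-part complexity $(c_1,c_2)$ that is preserved, respectively decreased, by each fold, the composing spaces being exactly the delicate case you flag. You correctly identify the strategy and the main obstruction; what you leave open --- the precise tameness conditions at braid and composing vertices (Definition~\ref{DT}), the height-weighted complexity $c_1$, and the case analysis showing folds preserve tameness and strictly decrease $c$ --- is precisely where all of the paper's work lies.
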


The proof relies on computing both the bridge number and the meridional rank. We conclude this section with the computation of the bridge number which is an easy consequence of the work of Schubert.

%
%-----------------height function----------------------------------------------------

\smallskip Remember that any vertex $v\in V_1$ is labeled $\beta_v$, where  $\beta_v$ is a   braid with $n_v$ strands. We define  the function $n:VA\rightarrow \mathbb{N}$ by
$$
n(v)=\left\{
\begin{array}{lcl}
n_v & \text{if} & v\in V_1  \\
1 & \text{if}  & v\in V_0\cup V_2 \\
 \end{array}
\right.
$$
Recall that, for any $v\in VA$,  $\gamma_v= e_{v,1},\ldots,e_{v,r_v}$ is the unique reduced path in $A$ from  $v_0$ to $v$. We define the \emph{height}  of a vertex $v\in VA $ in $A$ as
$$ h(v):= \prod_{i=1}^{r_v} n({\alpha(e_{v,i})})$$
for  $v\neq v_0$ and  $h(v_0):=1$.

%We define the \emph{positive height} of a vertex $v\in VA$ as 

%---------------------------Bridge Number--------------------------------------------

\begin{lemma}\label{bridgenumber}
Let $\K=\K_{\mathcal{A}}$ be the knot defined by the labeled rooted tree $\mathcal{A}$.  Then the bridge number of $\K$ is given by 
$$  b(\K)=\bigg[\sum_{v\in V_{0}}h(v)\cdot b({\K}_v)\bigg ]-\bigg[\sum_{v\in V_{2}}h(v)\cdot (val_{+}(v,A)-1)\bigg] $$
\end{lemma}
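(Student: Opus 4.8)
The plan is to induct on $l(\mathcal A)$, following exactly the three-way recursive definition of $\K_{\mathcal A}$, and to feed two classical facts of Schubert into the induction. The first is the (almost) additivity of bridge number under connected sum: $b(\#_{i=1}^{d}\K_i)=\big[\sum_{i=1}^{d}b(\K_i)\big]-(d-1)$. The second is Schubert's satellite formula in the sharp form $b(\beta(\K_1))=n\cdot b(\K_1)$ for an $n$-braid $\beta$. For this I would invoke Schubert's lower bound $b(K)\ge w\cdot b(C)$, where $w$ is the wrapping number of the pattern and $C$ the companion, and observe that for a closed $n$-braid pattern $w=n$: the standard meridian disk of $V_0$ meets $\widehat\beta$ in $n$ points, so $w\le n$, while all strands are coherently oriented, so the winding number is $n$ and hence $w\ge n$. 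The reverse inequality $b(\beta(\K_1))\le n\cdot b(\K_1)$ comes from the explicit construction of taking $n$ parallel copies of a minimal bridge presentation of $\K_1$ and inserting $\beta$, so equality holds.

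Before the induction I would record two bookkeeping observations comparing $\mathcal A$ with the subtrees $\mathcal A_{v_i}$ hanging off the children $v_1,\dots,v_d$ of $v_0$. First, for every $v\neq v_0$ the positive star $St_+(v,A)$ agrees with the positive star of $v$ inside the subtree $A(v_i)$ containing it, because the canonical orientations coincide on $A(v_i)$; hence $val_+(v,A)$ and the membership of $v$ in $V_0,V_1,V_2$ are unchanged. Second, if $v$ lies in the subtree rooted at $v_i$, then the reduced path from $v_0$ to $v$ is obtained by prefixing the edge from $v_0$ to $v_i$ to the reduced path from $v_i$ to $v$, so the heights satisfy $h_{\mathcal A}(v)=n(v_0)\cdot h_{\mathcal A_{v_i}}(v)$, where $h_{\mathcal A_{v_i}}$ is the height function of the labeled tree $\mathcal A_{v_i}$.

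The base case $l(\mathcal A)=0$ is immediate: $A=\{v_0\}$ with $v_0\in V_0$, $V_2=\emptyset$, $h(v_0)=1$, so the formula reads $b(\K)=b(\K_{v_0})$. For the inductive step I split on $val(v_0,A)$. If $val(v_0,A)=1$ then $v_0\in V_1$, $\K=\beta_{v_0}(\K_{\mathcal A_{v_1}})$ and $n(v_0)=n_{v_0}$; here $VA(v_1)=VA\setminus\{v_0\}$, the sets $V_0$ and $V_2$ of $\mathcal A$ equal those of $\mathcal A_{v_1}$ since $v_0\notin V_0\cup V_2$, and every height is multiplied by $n_{v_0}$, so the entire right-hand side scales by $n_{v_0}$. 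By the induction hypothesis it equals $n_{v_0}\cdot b(\K_{\mathcal A_{v_1}})=b(\K)$. If $val(v_0,A)=d\ge 2$ then $v_0\in V_2$, $\K=\#_{i=1}^{d}\K_{\mathcal A_{v_i}}$ and $n(v_0)=1$, so $h_{\mathcal A}(v)=h_{\mathcal A_{v_i}}(v)$ for all $v\neq v_0$. The subtrees $A(v_1),\dots,A(v_d)$ partition $VA\setminus\{v_0\}$, so the $V_0$-sum of $\mathcal A$ decomposes as the sum of the $V_0$-sums of the $\mathcal A_{v_i}$, and the $V_2$-sum decomposes as the sum of the $V_2$-sums of the $\mathcal A_{v_i}$ together with the single root term $h(v_0)\,(val_+(v_0,A)-1)=d-1$. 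By the induction hypothesis the right-hand side equals $\big[\sum_{i=1}^{d}b(\K_{\mathcal A_{v_i}})\big]-(d-1)$, which is $b(\K)$ by additivity.

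The purely combinatorial part, namely the partition of the vertex sets and the height-scaling identity $h_{\mathcal A}(v)=n(v_0)\,h_{\mathcal A_{v_i}}(v)$, is routine; the crucial conceptual point is the dichotomy $n(v_0)=n_{v_0}$ in the satellite case versus $n(v_0)=1$ in the connected-sum case, which is precisely what makes the multiplicative factor $n_{v_0}$ appear for braid vertices while the connected-sum vertex contributes only the correction $-(d-1)$. The one genuine external input is the sharp satellite formula $b(\beta(\K_1))=n\cdot b(\K_1)$; I expect the only point needing care there to be the identification of the wrapping number of a braid-closure pattern with $n$, which is settled by the inequalities $n=|\text{winding}|\le w\le n$.
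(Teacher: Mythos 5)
Your proposal is correct and follows essentially the same route as the paper: induction on $l(\mathcal A)$ along the three-way recursive definition of $\K_{\mathcal A}$, with the same height-scaling and vertex-set bookkeeping, feeding in Schubert's formulas $b(\sharp_i \K_i)=\sum_i b(\K_i)-(d-1)$ and $b(\beta(\K_1))=n\cdot b(\K_1)$ at the two non-trivial steps. The only difference is that you sketch a justification of the satellite formula via the wrapping/winding number of a braid-closure pattern, whereas the paper simply cites Schubert and Schultens for it.
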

The proof of the Lemma relies in the following result of Schubert \cite{Schu}, see also  \cite{Schultens} for a more modern proof. Note that the result of Schubert is actually stronger than what we state. 
\begin{theorem}[Schubert]{\label{Sc}}
Let $\K_1,\ldots , \K_d$ be  knots in $S^3$ and $\beta$ be an $n$-braid such that the closed braid  $\widehat{\beta}$ is a knot. Then the following hold:
\begin{enumerate}
\item[(i)] $b(\K_1\sharp\ldots \sharp \K_d)=b(\K_1)+\ldots +b(\K_d)-(d-1)$.
\item[(ii)] $ b(\beta(\K_1))=n\cdot b(\K_1)$.
\end{enumerate}
\end{theorem}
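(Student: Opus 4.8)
The plan is to prove (i) and (ii) by establishing matching upper and lower bounds for the bridge number, where I regard a bridge presentation as a Morse embedding of the knot with respect to a fixed height function $h:S^3\to\mathbb R$ and $b(\K)$ as the minimal number of maxima over all such embeddings. Both upper bounds are explicit constructions. For (ii), place $\K_1$ in minimal bridge position with $b(\K_1)$ maxima and insert the $n$-strand pattern so that the braiding occurs only along the monotone (critical-point-free) arcs of $\K_1$; then each maximum of $\K_1$ lifts to $n$ maxima of $\beta(\K_1)$ while the braid crossings, being monotone with respect to $h$, create no new critical points, so $b(\beta(\K_1))\le n\cdot b(\K_1)$. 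For (i) it suffices by associativity of the connected sum to treat $d=2$ and then induct: put $\K_1$ above a horizontal sphere and $\K_2$ below it, each in minimal bridge position, and splice one maximum of $\K_1$ to one minimum of $\K_2$ by a monotone band; the two critical points merge into a single monotone arc, leaving $b(\K_1)+b(\K_2)-1$ maxima.

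The lower bound for (i), $b(\K_1\sharp\cdots\sharp\K_d)\ge\sum_i b(\K_i)-(d-1)$, is Schubert's additivity theorem and is the first of the two hard points. Reducing again to $d=2$, the relevant object is the decomposing sphere $S$, a $2$-sphere meeting $\K=\K_1\sharp\K_2$ transversally in two points; since both summands are nontrivial, the twice-punctured sphere $S\cap X(\K)$ is incompressible and not boundary parallel. I would place $\K$ in thin position with respect to $h$ and study the singular foliation induced on $S$ by the level spheres. The decisive step is to isotope $S$, rel $\K$, into a position in which $h|_S$ has a single maximum and a single minimum and $S$ meets one bridge level sphere in a single essential circle; this is forced by Gabai's thin-position machinery together with innermost-disk and no-thin-level arguments for knots carrying an essential meridional sphere. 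Once $S$ is so positioned, cutting the bridge presentation of $\K$ along $S$ yields bridge presentations of $\K_1$ and $\K_2$ whose numbers of maxima sum to $b(\K)+1$, giving the inequality.

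The lower bound for (ii), $b(\beta(\K_1))\ge n\cdot b(\K_1)$, is the specialization of Schubert's satellite inequality to braided patterns and is the second hard point. Here the essential surface is the companion torus $T=\partial V(\K_1)$, which is incompressible and meridionally incompressible in $X(\beta(\K_1))$ because $\K_1$ is nontrivial and the braid pattern has wrapping number $n\ge 2$, its geometric intersection with a meridian disk of $V_0$ being exactly $n$. Placing $\beta(\K_1)$ in thin position and isotoping $T$ to intersect the level spheres efficiently, the same thin-position and incompressible-surface analysis shows that $T$ can be made to meet a bridge level in meridian curves which cut a bridge presentation of the companion $\K_1$ out of the given presentation of $\beta(\K_1)$, with each bridge of $\K_1$ accounting for exactly $n$ bridges of $\beta(\K_1)$; hence $b(\beta(\K_1))\ge n\cdot b(\K_1)$.

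In both statements the main obstacle is the lower bound: the upper bounds and the reductions to $d=2$ are elementary Morse-theoretic constructions, whereas proving that the relevant essential surface, the decomposing sphere in (i) and the companion torus in (ii), can be isotoped into a position compatible with a minimal bridge sphere is precisely the technical core of Schubert's theorem. This is where thin position and the study of incompressible and meridionally incompressible surfaces in thin-positioned knot exteriors do the real work, and it is the reason these equalities are substantially deeper than their easy upper-bound halves.
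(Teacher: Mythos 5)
The first thing to note is that the paper contains no proof of Theorem~\ref{Sc} to compare against: the theorem is quoted as a classical result of Schubert \cite{Schu}, with \cite{Schultens} cited for a modern proof, and the authors even remark that Schubert's result is stronger than the statement they use. Your outline is therefore not an alternative to an argument in the paper but a reconstruction of the cited literature, and as a road map it is the correct one: elementary Morse-theoretic upper bounds, and lower bounds obtained by leveling an essential surface with respect to the height function --- the decomposing sphere for (i), the companion torus for (ii) --- which is exactly how Schultens' proof of additivity and Schubert's satellite inequality proceed. You also correctly locate where nontriviality of the companion matters: part (ii) is simply false for $\K_1$ the unknot (the closure of the $2$-braid $\sigma_1$ is the unknot, of bridge number $1$, not $2$), so incompressibility of $T$, hence nontriviality of $\K_1$, is a genuine hypothesis; the paper satisfies it implicitly because every label $\K_v$, $v\in V_0$, is a nontrivial knot.

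As a proof, however, your attempt has two gaps. First, the decisive steps --- that $S$ can be isotoped, keeping $\K$ fixed, to have a single maximum and a single minimum and to meet a level sphere in one essential circle, and that $T$ can be leveled so that each bridge of $\K_1$ carries exactly $n$ bridges of $\beta(\K_1)$ --- are invoked by appeal to ``thin-position machinery'' but never established; these leveling lemmas are not auxiliary facts, they \emph{are} the content of Schubert's theorem, so what you have written is a skeleton of \cite{Schu} and \cite{Schultens} rather than a proof. Second, there is an internal inconsistency in your treatment of (i): you place $\K$ in thin position and then argue about a ``bridge level sphere.'' For a composite knot thin position is never bridge position (the decomposing sphere forces a thin level, by the results of Thompson and Wu on essential meridional planar surfaces), and a lower bound on $b(\K)$ must start from a \emph{minimal bridge presentation}; Schultens' argument takes $\K$ in minimal bridge position and levels $S$ via upper-disk/lower-disk arguments, not via thin position of $\K$. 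A similar caution applies to (ii): the braid hypothesis enters only your upper bound, while the lower bound is Schubert's inequality $b(\beta(\K_1))\ge n\cdot b(\K_1)$, valid for any pattern of wrapping number $n$, and your sentence about the torus ``cutting a bridge presentation of $\K_1$ out of'' the given one compresses precisely the step that requires proof.
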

\begin{proof}[Proof of Lemma~\ref{bridgenumber}]
The proof is by induction on  $l(\mathcal{A}):=max\{d(v,v_0) \ | \ v\in VA\}$. If $l(\mathcal{A})=0$, then $\K=\K_{v_0}$. Hence we have
$$ b(\K)=b(\K_{v_0})=\bigg[\sum_{v\in V_{0}=\{v_0\}}h(v)\cdot b({\K}_v)\bigg ]-\bigg[\sum_{v\in V_{2}=\emptyset}h(v)\cdot (val_{+}(v,A)-1)\bigg]$$ as $h(v_0)=1$.
 
Suppose that  $l(\mathcal{A})>0$ and $val_{+}(v_0,A)=1$. Let  $v_1\in VA$ be the unique vertex such that $d(v_0,v_1)=1$. In this case  $V_0\cup V_2 \subset VA(v_1)$. If $h_{v_1}$ denotes the height of a vertex in the rooted tree $(A(v_1),v_1)$, then it is easy to see that $h(v)=n({v_0})h_{v_1}(v)$ for all $v \in VA(v_1)$. Moreover, $val_{+}(v,A(v_1))=val_{+}(v,A)$ for all $v\in VA(v_1)$.  By Theorem \ref{Sc}(ii) and the induction hypothesis  we obtain:
\begin{eqnarray}
b(\K) & \overset{\text{3(ii)} }= & n_{v_0}\cdot b(\K_{\mathcal{A}_{v_1}})\nonumber\\
 & \overset{\text{i.h.} }= &   n_{v_0}\cdot \bigg[ \sum_{v\in V_0\cap VA(v_1)}h_{v_1}(v)\cdot b(\K_{v}) - \nonumber\\
 &   & -\sum_{v\in V_2\cap VA(v_1)}h_{v_1}(v)\cdot (val_+(v,A(v_1))-1) \bigg]\nonumber\\
 & = & \sum_{v\in V_0\cap VA(v_1)}n_{v_0}h_{v_1}(v)\cdot b(\K_{v}) -\sum_{v\in V_2\cap VA(v_1)}n_{v_0}h_{v_1}(v)\cdot (val_+(v,A)-1) \nonumber\\
& = &  \bigg[\sum_{v\in V_{0}}h(v)\cdot b({\K}_v)\bigg ]-\bigg[\sum_{v\in V_{2}}h(v)\cdot (val_{+}(v,A)-1)\bigg] \nonumber
\end{eqnarray}

Suppose now  that $l(\mathcal{A})>0$ and $d:=val_{+}(v_0,A)\ge 2$. Let $v_1,\ldots,v_d\in VA$ such that $d(v_i,v_0)=1$. By definition, $\K$ is equal to the connected sum $\overset{d}{\underset{i=1}{\sharp}}
{\K_{\mathcal{A}_{v_i}}}$. Observe that if   $h_{v_i}$ denotes the height  of a vertex in the rooted tree $(A(v_i),v_i)$, then $h_{v_i}(v)=h(v)$ for any $v\in VA(v_i)$.  By Theorem \ref{Sc}(i) and the induction hypothesis we obtain: 
\begin{eqnarray}
b(\K) & \overset{\text{3(i)} }= & \bigg[\sum_{i=1}^{d}{b(\K_{\mathcal{A}_{v_i}})}\bigg]-(d-1)=\bigg[\sum_{i=1}^{d}{b(\K(\mathcal{A}_{v_i}))}\bigg]-(val_{+}(v_0,A)-1)=\nonumber\\
      & \overset{\text{i.h.}}= & \bigg[\sum_{i=1}^{d}{\bigg( \sum_{v\in V_0\cap VA(v_i)}h_{v_i}(v)b({\K_{\mathcal{A}_{v_i}}}) -\sum_{v\in V_2\cap VA(v_i)}h_{v_i}(v)(val_{+}(v,A(v_i))-1) \bigg)}\bigg]-\nonumber\\
      &   &  -(val_{+}(v_0,A)-1) \nonumber\\
      & = & \bigg[\sum_{v\in V_{0}}h(v)\cdot b({\K}_v)\bigg ]-\bigg[\sum_{v\in V_{2}}h(w)\cdot (val_{+}(v,A)-1)\bigg]\nonumber
\end{eqnarray}
since $V_0=\underset{i}\cup(V_0\cap VA(v_i))$ and $V_2=\{v_0\}\cup (\underset{i}{\cup}(V_2\cup VA(v_i)))$. 
\end{proof}

%--------------MERIDIONALLY TAMENESS OF TORUS KNOTS------------------------------------

\section{Description of $G(\K_{\mathcal{A}})$.}{\label{DSF}}

In the previous section we have constructed a knot $\K_{\mathcal A}$ from a labeled tree $\mathcal A$. The construction implies that the knot space $X(\K_{\mathcal A})$ contains a collection $\mathcal T$ of incompressible tori corresponding to the edges of $A$ such that to each vertex $v\in VA$ there corresponds a component of the complement of $\mathcal{T}$ such that the following hold:

\begin{enumerate}
\item The vertex space associated to each vertex $v\in V_0$ is $X(\K_v)$.
\item The vertex space associated to a vertex $v\in V_1$ is the braid space $CS(\beta_v)$, see below for details.
\item The vertex space associated to a vertex $v\in V_2$ is an $r$-fold composing space where $r=val_+(v,A)$.
\end{enumerate} 

Thus $X(\K_{\mathcal A})$ can be thought of as a tree of spaces. It follows from the theorem of Seifert and van Kampen that corresponding to the tree of spaces there exists a tree of groups decomposition $\mathbb A$ of  $G(\K_{\mathcal{A}})$ such that all edge groups are free Abelian of rank $2$. It is the aim of this section to describe this tree of groups. We will first describe the vertex groups that occur in this splitting and then conclude by describing the boundary monomorphisms of the tree of groups.

\textbf{(1)}  If $v\in V_0$ then the complementary component of $\mathcal T$ corresponding to $v$ is the knot space of $\K_v$. Thus we put $A_v:=G(\K_v)$.  Denote by  $m_v\in P(\K_v)$ the meridian  and by $l_v\in P(\K_v)$ the  longitude of $\K_v$.
 
\smallskip \textbf{(2)} We now describe the vertex group $A_v$ for $v\in V_1$. Let $n:=n_v$ be the number of strands of the associated braid $\beta_v$.  By definition the associated permutation  $\tau\in S_{n}$ of $\beta_v$ is an $n$-cycle (equivalently the closed braid $\widehat{\beta_v}$ is a knot) and  $\widehat{\beta_v}$ is standardly embedded in the interior of an unknotted  solid torus $V_0 \subset \mathbb{R}^3$, see Figure~\ref{Fig1}. The \textit{braid space of} $\beta_v$ is defined as 
$$CS(\beta_v):=\overline{V_0-V(\beta_v)}$$ 
where $V(\beta_v)$ is a regular neighborhood of $\widehat{\beta_v}$ contained in the interior of $V_0$, see Figure~\ref{Fig2}. The complementary component of $\mathcal T$ corresponding to $v$ is by construction homeomorphic to $CS(\beta_v)$.

There is an obvious fibration $CS(\beta_v)\rightarrow S^1$ of $CS(\beta_v)$ onto $S^1$ induced by the projection of $V_0=\mathbb{D}^2\times S^1$ onto the second factor. The fiber is clearly the space $$X:=\mathbb{D}^2-Q_{n},$$ where $Q_{n}$ is the union of the interior of $n$ disjoint   disks contained in the interior of the unit disk.  
\begin{figure}[h!]
\begin{center}
\includegraphics[scale=1]{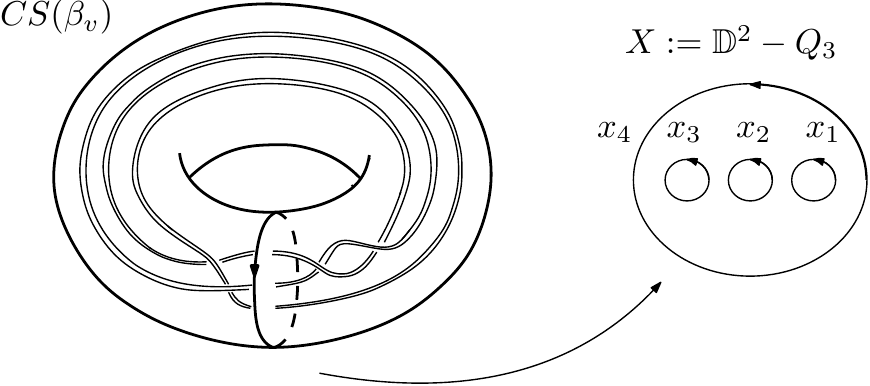}
\end{center}
\caption{The $3$-manifold $CS(\beta)$.}\label{Fig2}
\end{figure}

\smallskip We will denote the free generators of $\pi_1(X)$ corresponding to the boundary paths of the removed disks by $x_1,\ldots, x_{n}$. This gives a natural identification of $\pi_1(X)$ with $F_{n}$. We obtain 
the short exact sequence 
$$ 1\rightarrow F_n=\pi_1(X)\rightarrow \pi_1(CS(\beta_v))\rightarrow \pi_1(S^1)=\mathbb Z\rightarrow 1.$$
Let $t \in A_v:=\pi_1 CS(\beta_v)$ be the element represented by the loop in $CS(\beta_v)$ defined by $q_0\times S^1$, for a point $q_0\in \partial \mathbb{D}^2$.   Thus $t $ represensts a longitude of $V_0$ in the above sense.  We  can write $A_v$ as the semi-direct product $F_{n}\rtimes \mathbb{Z}$ where the  action of $\mathbb{Z}=\langle t \rangle$ on the fundamental group of the fiber is given by 
$$ tx_it^{-1}=A_ix_{\tau(i)}A_i^{-1} \ \ \ \ 1\leq i\leq n$$
and the words $A_1,\ldots, A_{n}\in F_{n}$ 
satisfy the identity
$$A_1x_{\tau(1)}A_1^{-1}\cdot\ldots\cdot A_{n}x_{\tau(n)}A_{n}^{-1}=x_1\cdot\ldots\cdot x_{n}$$ 
in the free group $F_{n}$.

Note that  any element of $A_v$ can be uniquely  written in the form $w\cdot t^r$ with $w\in F_{n}$ and $r\in \mathbb{Z}$. Moreover 
$$ C_v:=\pi_1(\partial V_0)=\langle x_{n+1},t\rangle$$
where $x_{n+1}:=x_1\cdot\ldots\cdot x_{n}$. Note that in a satellite construction with braid pattern $\beta_v$ the curve corresponding to $x_{n+1}$ is identified with the meridian of the companion knot and the curve corresponding to $t$ is identified with the longitude of the companion knot. Finally      
$$P_v:=\pi_1(\partial V(\beta_v))=\langle m_v,l_v\rangle$$
where  $m_v:=x_1$ and $l_v:=u \cdot t^n$ for some  $u \in F_{n}$. Note that for a satellite with braid patttern $\beta_v$, thus in particular for the knot $\K_{\mathcal A_v}$, $m_v$ and $l_v$ represent the meridian and the longitude of the satellite knot.

Note further that $F_{n}=\langle\langle m_v \rangle\rangle_{A_v}$ (normal closure in $A_v$) as any two elements of $\{x_1,\ldots ,x_{n}\}$ are conjugate in $A_v$. It follows in particular that $C_v\cap\langle\langle m_v\rangle\rangle_{A_v}=\langle x_{n+1}\rangle$.

\smallskip
\textbf{(3)} We now describe the vertex group $A_v$ if $v\in V_2$. Let $n=val_{+}(v,A)$.  By construction the complementary component of $\mathcal T$ corresponding to the vertex $v$ is homeomorphic to an $n$-fold composing space $W_{n}:=X\times S^1$, where $X=\mathbb{D}^2-Q_{n}$ is as before, see \cite{JS}. Thus
$$A_v:=\pi_1(W_{n})=\pi_1 (X)\times \pi_1(S^1)=\langle x_{1},\ldots,x_{n},t  \ | \ [x_{ i},t]=1\rangle.$$
Consequently any element of $A_v$ can be uniquely written as $w\cdot t^z$ with $w\in F_{n}$ and $z\in \mathbb{Z}$. Clearly $t$ generates the center of $A_v$.

\smallskip If the homeomorphism is chosen appropriately then we get the following with the above notation:
\begin{enumerate}
\item $P_v:=\langle l_v,m_v\rangle $ corresponds to the peripheral subgroup of $\K_{\mathcal A_v}$ where $l_v:=x_{n+1}$ is the longitude and $m_v:=t$ is the meridian.
\item There exists a bijection $j: St_{+}(v,A)\rightarrow \{1,\ldots ,n\}$ such that for any $e\in St_{+}(v,A)$ the subgroup  $C_e:=\langle l_e,m_v\rangle$ correponds to the peripheral subgroup of $\K_{\mathcal A_{\omega(e)}}$ with $l_e:=x_{j(e)}$ the longitude and $m_v:=t$ the meridian.  
\end{enumerate}

With the notation introduced we have  
$$A_v=F(\{l_e | e\in St_{+}(v,A)\})\times\langle m_v\rangle.$$
We further denote $F(\{l_e  |  e\in St_{+}(v,A)\}$ by $F_v$.

\textbf{(4)} For any edge $e\in E(A,v_0)$ the associated edge group $A_e$ is free Abelian generated by $\{m_e,l_e\}$.

\smallskip We now describe the boundary monomorphims.  For any $e\in E(A,v_0)$ with $v:=\alpha(e)$ and $w:=\omega(e)$ the boundary monomorphism   $\alpha_e:A_e\rightarrow A_{v}$ is given by:
$$
\alpha_e(m_e^{z_1}\cdot l_e^{z_2})=\left\{
\begin{array}{lcl}
x_{n+1}^{z_1}\cdot t^{z_2} & \text{if} & v\in V_1.  \\
m_{v}^{z_1}\cdot l_e^{z_2} & \text{if}  & v \in V_2. \\
 \end{array}
\right.
$$
while  $\omega_e:A_e\rightarrow A_{w}$   is given by $\omega_e(m_e^{z_1}\cdot l_e^{z_2})=m_w^{z_1}\cdot l_w^{z_2}$.

\section{Vertex Groups}

In this section we will study subgroups of the vertex groups $A_v$ of $\mathbb A$ for $v\in V_1\cup V_2$. As these vertex groups are semidirect products of a finitely generated free group and $\mathbb Z$ we start by considering certain subgroups of the free group $F_n$.
 
\medskip  We think of $F_n=F(x_1,\ldots,x_n)$ as the group given by the presentation 
$$\langle x_1,\ldots,x_{n+1}\mid x_1\cdot\ldots\cdot x_{n+1}\rangle$$ and identify $F_n$ with $\pi_1(X)$ where $X$ is the $(n+1)$-punctured sphere as before.

\smallskip We will study subgroups of $F_n$ that are generated by finitely many conjugates of the $x_i$. We call an element of $F_n$ {\em peripheral} if it is conjugate to $x_i^z$ for some $i\in\{1,\ldots ,n+1\}$ and $z\in\mathbb Z$.

%---------------------LEMMA--------------------------------------------------------------

\begin{lemma}\label{L2} Let $S=\{g_ix_{j_i}g_i^{-1}\mid 1\le i\le k\}$ with $k\geq 0$,  $g_i\in F_n$,  and $j_i\in\{1,\ldots ,n+1\}$ for $1\le i\le k$. Suppose that $U:=\langle S\rangle\neq F_n$.

Then there exists $T=\{h_lx_{p_l}h_l^{-1}\mid 1\le l\le m\}$ with $m\le k$ and $h_l\in F_n$ and $p_l\in\{1,\ldots ,n+1\}$ for $1\le l\le m$ such that the following hold:
\begin{enumerate}
\item $U$ is freely generated by  $T$.
\item  Any $h_lx_{p_l}h_l^{-1}$ is in U conjugate to some $g_ix_{j_i}g_i^{-1}$.
\item\label{con3} Any peripheral element of $U$ is in $U$ conjugate to an element of $\langle h_lx_{p_l}h_l\rangle$ for some $l\in\{1,\ldots ,m\}$. 
\end{enumerate}

In particular $\{j_1,\ldots,j_k\}=\{p_1,\ldots,p_m\}$.
\end{lemma}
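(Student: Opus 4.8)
The plan is to argue topologically, using the identification of $F_n$ with $\pi_1(X)$ where $X=\mathbb{D}^2-Q_n$ is the compact planar surface with $n+1$ boundary circles, the loops $x_1,\dots,x_{n+1}$ being the boundary loops, so that the peripheral elements are exactly the conjugates of powers of the $x_i$. First I would pass to the covering $q\colon Y\to X$ with $q_*\pi_1(Y)=U$; since $U$ is finitely generated, $Y$ contains a compact core $C$, i.e.\ a compact connected subsurface onto which $Y$ deformation retracts, so that $U\cong\pi_1(C)$. As $X$ is planar, every cover is planar, hence $C$ is a sphere with $s$ holes and $\pi_1(C)=\langle \partial_1,\dots,\partial_s\mid \partial_1\cdots\partial_s=1\rangle$, which is free of rank $s-1$ on any $s-1$ of the boundary loops $\partial_j$. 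The whole point is to show that one may take $T$ to consist of boundary loops of $C$.

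The first key step is a degree-one lemma: each generator $a_i:=g_ix_{j_i}g_i^{-1}$, being conjugate in $F_n$ to $x_{j_i}$ and not to a proper power, generates a \emph{maximal} peripheral cyclic subgroup $M_i\le U$, and on the surface side $M_i$ is carried by a boundary circle of $C$ mapping homeomorphically onto the $j_i$-th boundary circle of $X$. I would then split the boundary circles of $C$ into \emph{peripheral} ones (mapping to $\partial X$, hence carrying a conjugate of some $x_p^{\pm1}$) and \emph{internal} ones (carrying non-peripheral elements), and let $q$ denote the number of internal ones. The decisive estimate is $q\le 1$: writing $P$ for the set of peripheral boundary loops into which some $a_i$ is conjugate, the identity $U=\langle a_1,\dots,a_k\rangle$ forces $\langle\!\langle P\rangle\!\rangle=U$, and killing the loops of $P$ in $\pi_1(C)$ leaves a free group of rank $|P^c|-1$; triviality of the quotient gives $|P^c|\le 1$. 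Since the internal loops lie in $P^c$ (being non-peripheral, they cannot absorb a peripheral $a_i$), this bounds $q\le 1$ and shows simultaneously that, apart from at most one exceptional loop, every peripheral boundary loop of $C$ is conjugate to some $a_i$.

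It remains to rule out $q=0$, i.e.\ to exclude that $U$ is a proper finite-index subgroup. Here I would use the permutation representation $\rho\colon F_n\to S_d$ on the $d=[F_n:U]$ cosets: over the $x_j$-circle the degree-one boundary circles correspond to fixed points of $\rho(x_j)$, so generation of $U$ by degree-one loops would force the total number of cycles of length $\ge 2$ among $\rho(x_1),\dots,\rho(x_{n+1})$ to be at most one; but the relation $\rho(x_1)\cdots\rho(x_{n+1})=\mathrm{id}$ then forces every $\rho(x_j)$ to be the identity, whence $d=1$ and $U=F_n$, contradicting the hypothesis. Thus for nontrivial proper $U$ we obtain exactly $q=1$: the core $C$ has a single internal boundary circle and $s-1$ peripheral ones, all of which are conjugate to some $a_i$.

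Finally I would set $T$ to be the collection of the $s-1$ peripheral boundary loops, oriented so that each equals the generator $a_i$ it absorbs; these freely generate $U$ (giving (1), together with $m=s-1=\operatorname{rank}U\le k$), each is conjugate in $U$ to some $a_i$ (giving (2)), and since the maximal peripheral subgroups of $U=\pi_1(C)$ are precisely the conjugates of the boundary subgroups, every peripheral element of $U$ is conjugate into one of the $\langle h_lx_{p_l}h_l^{-1}\rangle$ (giving (3)). The concluding equality $\{j_1,\dots,j_k\}=\{p_1,\dots,p_m\}$ is then formal from (2) and (3) together with the fact that distinct $x_i$ lie in distinct conjugacy classes of $F_n$. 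I expect the main obstacle to be exactly this boundary analysis of the core: establishing the degree-one lemma and proving that $C$ has precisely one non-peripheral boundary component (the bound $q\le 1$ combined with the exclusion of the finite-index case), since everything else follows routinely from the structure of $\pi_1$ of a planar surface.
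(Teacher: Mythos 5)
Your argument is correct and follows essentially the same route as the paper: pass to the cover of the $(n+1)$-holed sphere $X$ corresponding to $U$, observe that it is planar with fundamental group carried by a holed-sphere core, identify each generator with a degree-one compact boundary circle, and show that all but one boundary circle of the core must be of this type (your ``kill the loops of $P$'' rank computation is a more explicit version of the paper's remark that otherwise $U$ could not be generated by $S$). The only genuine divergence is in excluding the finite-index case: the paper counts Euler characteristics ($\chi(\tilde X)=q(1-n)$ against at least $qn+1$ boundary circles forces $q=1$), whereas you use the coset permutation representation and the relation $\rho(x_1)\cdots\rho(x_{n+1})=\mathrm{id}$ to force every $\rho(x_j)$ to be trivial; both are valid and of comparable length. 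One small imprecision: a compact ``peripheral'' boundary circle of the cover carries a conjugate of a power $x_p^{\pm d}$, where $d$ is the degree with which it covers its target circle, not necessarily of $x_p^{\pm 1}$; this does not affect your argument, since the circles you actually place in $P$ are exactly the degree-one ones absorbing some $a_i$.
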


%--------------REMARK PROOF----------------------------------------------

\begin{remark} Note that the conclusion of Lemma~\ref{L2} does not need to hold if $U=F_n$. Indeed if $S=\{x_1,\ldots ,x_{n+1}\}\backslash\{x_i\}$ for some $i\in\{1,\ldots ,n+1\}$ then $\langle S\rangle=U=F_n$ and $U$ has $n+1$ conjugacy classes of peripheral subgroups, it follows that conclusion (\ref{con3}) cannot hold.
\end{remark}

%-------------------------------------------------------------------------
 
\begin{proof} Let $\tilde X$ be the cover of $X$ corresponding to $U\le F_n=\pi_1(X)$. Note that the $U$-conjugacy classes of maximal peripheral subgroups of $U$ correspond to compact boundary components of $\tilde X$ that finitely cover boundary components of $X$. Note that for any $i$ the $U$-conjugacy class of  $\langle g_ix_{j_i}g_i^{-1}\rangle$ must correspond to a compact boundary component of $\tilde{X}$ for which the covering is of degree $1$. %if such a boundary component is a $1$.

As $U$ is generated by peripheral elements it follows that the interior of $\tilde X$ is a punctured sphere. %. Note that any $g_ix_{j_i}g_i^{-1}$ is represented by a puncture that corresponds to a boundary component of $\tilde X$ that is mapped homeomorphically to a boundary component of $X$. It follows 
Note further that all but one punctures must correspond to compact boundary components of $\tilde{X}$ that cover components of $\partial X$ with degree $1$ and correspond to some $\langle g_ix_{j_i}g_i^{-1}\rangle$, otherwise $U$ could not be generated by $S$ by the above remark.

We now show that $\tilde{X}$ is an infinite sheeted cover of $X$, i.e. that the last puncture does not correspond to a compact boundary component of  $\tilde X$. Suppose that $\tilde X$ is a $q$-sheeted cover of $X$. As $\chi(X)=2-(n+1)=1-n$ it follows that $\chi(\tilde X)=q(1-n)=q-qn$. Clearly $\tilde X$ has at least $qn+1$ boundary  components as $n$ boundary components of $X$ must have $q$ lifts each in $\tilde X$. It follows that $$\chi(\tilde X)\le 2-(qn+1)=1-qn.$$ Thus $q=1$ and therefore $U=F_n$, a contradiction.

Thus $\tilde{X}$ is an infinite sheeted cover of $X$ whose interior is homeomorphic to a punctured sphere such that all but one punctures correspond to compact boundary components of $\tilde{X}$ and such that the conjugacy classes of peripheral subgroups of $U$ corresponding to these boundary are represented by  some $\langle g_ix_{j_i}g_i^{-1}\rangle$. This implies that there exists $T$ satisfying (1) and (2), indeed we take $T$ to be the tuple of elements corresponding to the compact boundary components $\tilde{X}$ . Item (3) is obvious.
\end{proof}

%--------------------COROLLARY----------------------------------

We now use Lemma~\ref{L2} to describe subgroups of $A:=\pi_1 CS(\beta)$  that are generated by finitely many meridians, i.e. conjugates of $x_1$. Here $\beta$ is an $n$-strand braid such that associated permutation  $\tau\in S_{n}$ of $\beta $ is an $n$-cycle. Recall that $A$ is generated by $\{x_1,\ldots ,x_n,t\}$, that $\langle \langle x_1\rangle\rangle=\langle x_1,\ldots ,x_n\rangle$ is free in $\{x_1,\ldots ,x_n\}$ and that in $A$ the element $x_1$ is conjugate to $x_i$ for $1\le i\le n$.  As in the previous section we denote the peripheral subgroups of $A$  by $P$ and~$C$.

\begin{corollary} {\label{C1}}
Let $S=\{ g_ix_1g_i^{-1} \vert 1\leq i\leq k \}$ with $k\geq 0$ and $g_i \in A$ for $1\le i\le k$. Suppose that $U:=\langle S\rangle \leq A$.

Then either  $U=\langle\langle x_1\rangle\rangle$  or  there exists 
$$T=\{ {g'}_lx_1{g'}_l^{-1} \vert 1\leq l\leq {m}\}$$  
with ${m}\leq k$ and ${g'}_l \in A$ for $1\le l\le m$    such that the following hold:
\begin{enumerate}
\item $U$ is freely generated by $T$.
\item For any $g\in A$ one of the following holds:
\begin{enumerate}
\item  $gPg^{-1}\cap U=\{1\}$.
\item  $gPg^{-1}\cap U=g\langle x_1 \rangle g^{-1}$ and $gx_1g^{-1}$ is in $U$ conjugate to ${g'}_lx_1{g'}_l^{-1}$ for  some $l\in\{1,\ldots,{m}\}$.
\end{enumerate}
 
\item For any $g\in A$  we have $gCg^{-1}\cap U=\{1\}$.
\end{enumerate}
\end{corollary}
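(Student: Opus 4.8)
The plan is to reduce the statement to Lemma~\ref{L2} and then read off the three conclusions from the peripheral structure of $F_n$. The first observation is that, since $F_n=\langle\langle x_1\rangle\rangle$ is normal in $A$, every conjugate $gx_1g^{-1}$ lies in $F_n$; in particular $U\le F_n$. Moreover, writing $g_i=w_it^{r_i}$ with $w_i\in F_n$ and iterating the relation $tx_at^{-1}=A_ax_{\tau(a)}A_a^{-1}$, one sees that $t^{r_i}x_1t^{-r_i}$ is an $F_n$-conjugate of $x_{\tau^{r_i}(1)}$. Hence each generator $g_ix_1g_i^{-1}$ is an $F_n$-conjugate of $x_{j_i}$ with $j_i:=\tau^{r_i}(1)\in\{1,\dots,n\}$. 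The crucial point here -- and the feature that will later separate $P$ from $C$ -- is that these indices never equal $n+1$, because $\tau$ permutes $\{1,\dots,n\}$.

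If $U=\langle\langle x_1\rangle\rangle=F_n$ we are in the first alternative and there is nothing to prove. Otherwise $U\ne F_n$, and Lemma~\ref{L2} applies to the generating set rewritten as $\{h\,x_{j_i}\,h^{-1}\}$ above, producing $T'=\{h_lx_{p_l}h_l^{-1}\mid 1\le l\le m\}$ with $m\le k$ that freely generates $U$, with $\{p_1,\dots,p_m\}=\{j_1,\dots,j_k\}\subseteq\{1,\dots,n\}$, and with the property that every peripheral element of $U$ is $U$-conjugate into some $\langle h_lx_{p_l}h_l^{-1}\rangle$. Since each $x_{p_l}$ is $A$-conjugate to $x_1$, say $x_{p_l}=c_lx_1c_l^{-1}$, setting ${g'}_l:=h_lc_l$ rewrites $T'$ as $T=\{{g'}_lx_1{g'}_l^{-1}\}$ and yields conclusion (1).

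For the intersection statements I would use the retraction $\epsilon\colon A\to\mathbb Z$ with kernel $F_n$ (the exponent of $t$). Since $\epsilon(l)=n\ne0$ and $\epsilon(t)=1\ne0$, while $\epsilon(x_1)=\epsilon(x_{n+1})=0$, and since the abelian groups $P=\langle x_1,l\rangle$ and $C=\langle x_{n+1},t\rangle$ are generated as indicated, one gets $P\cap F_n=\langle x_1\rangle$ and $C\cap F_n=\langle x_{n+1}\rangle$; as $\epsilon(gpg^{-1})=\epsilon(p)$ the same computation after conjugating by $g$ gives $gPg^{-1}\cap F_n=g\langle x_1\rangle g^{-1}$ and $gCg^{-1}\cap F_n=g\langle x_{n+1}\rangle g^{-1}$. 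Because $U\le F_n$ this reduces (2) and (3) to understanding $g\langle x_1\rangle g^{-1}\cap U$ and $g\langle x_{n+1}\rangle g^{-1}\cap U$. Here I would record the identity $tx_{n+1}t^{-1}=x_{n+1}$, which follows from $x_{n+1}=x_1\cdots x_n$ together with the defining relation $A_1x_{\tau(1)}A_1^{-1}\cdots A_nx_{\tau(n)}A_n^{-1}=x_1\cdots x_n$; consequently, writing $g=wt^s$, both $gx_1g^{-1}$ and $gx_{n+1}g^{-1}$ are $F_n$-conjugates of some $x_{j}$ with $j\le n$ and of $x_{n+1}$ respectively, hence peripheral elements of $F_n$.

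The final step upgrades Lemma~\ref{L2}(3) to the precise form required, and this is where the argument is most delicate. If $g\langle x_1\rangle g^{-1}\cap U\ne1$ then a nontrivial power $(gx_1g^{-1})^c$ is a peripheral element of $U$, so by Lemma~\ref{L2}(3) it is $U$-conjugate to a power $({g'}_lx_1{g'}_l^{-1})^d$; comparing the underlying punctures and using that powers of distinct generators of $F_n$ are never conjugate forces the same puncture and $c=d$, and the uniqueness of roots in the free group $F_n$ then promotes this to a $U$-conjugacy between $gx_1g^{-1}$ and ${g'}_lx_1{g'}_l^{-1}$. In particular $gx_1g^{-1}\in U$, whence $gPg^{-1}\cap U=g\langle x_1\rangle g^{-1}$, giving (2). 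For (3) the same mechanism yields a contradiction instead: a nontrivial power of $gx_{n+1}g^{-1}$ in $U$ would be $U$-conjugate to a power of some ${g'}_lx_1{g'}_l^{-1}$, i.e. $x_{n+1}^c$ would be $F_n$-conjugate to $x_{p_l}^d$ with $p_l\le n$, which is impossible since $x_{n+1}$ lies in a peripheral conjugacy class distinct from those of $x_1,\dots,x_n$; hence $gCg^{-1}\cap U=1$. The main obstacle is thus the careful peripheral bookkeeping in this last step, and the conceptual key that makes it work is precisely that the indices $p_l$ produced by Lemma~\ref{L2} are confined to $\{1,\dots,n\}$, so the class of $x_{n+1}$ -- on which $C$ is built -- can never appear in $U$.
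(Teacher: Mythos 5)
Your proof is correct and follows essentially the same route as the paper's: rewrite each generator as an $F_n$-conjugate of some $x_{j_i}$ with $j_i\in\{1,\dots,n\}$, apply Lemma~\ref{L2} to the proper subgroup $U\le F_n$, compute $gPg^{-1}\cap F_n=g\langle x_1\rangle g^{-1}$ and $gCg^{-1}\cap F_n=g\langle x_{n+1}\rangle g^{-1}$, and then upgrade Lemma~\ref{L2}(3) via uniqueness of roots in free groups for (2) and derive a contradiction from the distinctness of the peripheral class of $x_{n+1}$ for (3). The only differences are expository (your explicit use of the exponent-sum retraction and the identity $tx_{n+1}t^{-1}=x_{n+1}$, which the paper leaves implicit).
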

\begin{proof}
For any $g=u\cdot t^r\in A$ we have $gx_1g^{-1}=wx_{i}w^{-1}$ %and $gx_{n+1}g^{-1}=ux_{n+1}u^{-1}$ 
where    $i=\tau^r(1)\in \{1,\ldots,n\}$ and $w\in F_n$.  Hence $S$ can be rewritten in the form  
$$S=\{w_1x_{j_1}w_1^{-1},\ldots, w_kx_{j_k}w_k^{-1}\}$$
with $w_i \in F_n$ and $j_i\in \{1,\ldots,n\}$ for $1\le i\le k$. By Lemma~\ref{L2} there exists 
$$ T=\{ {w'}_1x_{p_1}{w'}_1^{-1},\ldots, {w'}_{m}x_{p_{m}}{w'}_{m}^{-1}\} $$
with $m\leq k$ and ${w'}_l \in F_n$ and $p_l\in \{1,\ldots,n\}$ for $1\le l\le m$ such that $U$ is freely generated by $T$ and that the other conclusions of Lemma~\ref{L2} are satisfied. We will show that $T$ satisfies (1)-(3).

Now each $w'_lx_{p_l}{w'_l}^{-1}$ can be written as ${g'}_lx_1{g'}_{l}^{-1}$ for some $g'_l\in A$. Hence $T$ satisfies (1).

Note that for any $g\in A$ we have 
$$gPg^{-1}\cap U\le gPg^{-1}\cap F_n = gPg^{-1}\cap gF_ng^{-1}= g(P\cap F_n)g^{-1}=g\langle x_{1} \rangle g^{-1}.$$
Suppose that $gPg^{-1}\cap U\neq 1$, i.e.  $gx_{1}^zg^{-1}\in U$ for  some  integer $z\neq 0$. It follows from Lemma~\ref{L2}(3) and the fact that any element in a  free group is contained in a unique maximal cyclic subgroup that $gx_{1}g^{-1}\in U$  and that $gx_{1}g^{-1}$ is in $U$ conjugate to ${g'}_lx_{1}{g'}_l^{-1}$ for some $l\in \{1,\ldots,m\}$, thus we have shown that $T$ satisfies (2). 

(3) For any $g=w\cdot t^r\in A$ we have $$gCg^{-1}\cap U\le gCg^{-1}\cap F_n = gCg^{-1}\cap gF_ng^{-1}= g(C\cap F_n)g^{-1}=g\langle x_{n+1} \rangle g^{-1}.$$
If  $gx_{n+1}^zg^{-1}=wx_{n+1}^zw^{-1}\in U$ for some integer $z\neq 0$, then    Lemma~\ref{L2} implies that   $wx_{n+1}^zw^{-1}$ is in $U$   conjugate to  $w'_{l }x_{p_{l}}^z{w'}_{l }^{-1}$ for some $l$. But this is a contradiction since in $F_n$  the element    $x_i^z$ is not   conjugate to $x_j^z$ for $i\neq j\in \{1,\ldots,n+1\}$. 
\end{proof}

We now use Lemma~\ref{L2} to describe subgroups of 
$$B=\pi_1W_n=\langle x_1,\ldots,x_n,t | [x_i,t]=1\rangle$$
that are generated   by $t$ and conjugates of the $x_i$.    Here $P=\langle x_{n+1},t\rangle $ and $C_i=\langle x_i,t\rangle$ for $1\le i\le n$.

\begin{corollary}\label{C3} 
Let $S=\{ g_ix_{j_i}g_i^{-1} \vert 1\leq i\leq k \}$ with   $g_i \in B$  and $j_i\in\{1,\ldots,n\}$ for $1\le i\le k$. Let $U:=\langle S\rangle \times \langle t\rangle\leq B$.

Then either  $U=B$  or  there exists 
$$T=\{ {h}_lx_{p_l}{h}_l^{-1} \vert 1\leq l\leq {m}\}$$   
with ${m}\leq k$, ${h}_l \in F_n$ and $p_l\in\{1,\ldots ,n\}$ for $1\le l\le m$  such that the following hold:
\begin{enumerate}
\item $\{j_1,\ldots,j_k\}=\{p_1,\ldots,p_m\}$.
\item $\langle S\rangle$ is freely generated by $T$.
\item For any $g\in B$ and any $i\in \{1,\ldots,n\}$   one of the following holds: 
\begin{enumerate}
\item $gC_ig^{-1}\cap U=\langle t\rangle$.
\item $gC_ig^{-1}\cap U=gC_ig^{-1}=g\langle x_i \rangle g^{-1}\times \langle t\rangle$ and $gx_ig^{-1}$ is in $U$ conjugate to ${h}_lx_{p_l}{h}_l^{-1}$ for  some $l\in\{1,\ldots,{m}\}$. In particular $i=p_l$.
\end{enumerate}
\item For any $g\in B$  we have $gPg^{-1}\cap U=\langle t\rangle$.
\end{enumerate}
\end{corollary}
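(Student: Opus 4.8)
The plan is to push everything into the free group $F_n=F(x_1,\dots,x_n)$ and invoke Lemma~\ref{L2}, using repeatedly that $t$ is central in $B=F_n\times\langle t\rangle$ and commutes with every $x_i$. First I would note that for $g=wt^r\in B$ with $w\in F_n$ and $j\in\{1,\dots,n\}$ we have $gx_jg^{-1}=wx_jw^{-1}\in F_n$, so each generator of $\langle S\rangle$ already lies in $F_n$. Rewriting $S=\{w_ix_{j_i}w_i^{-1}\mid 1\le i\le k\}$ with $w_i\in F_n$, the subgroup $\langle S\rangle\le F_n$ is exactly of the type treated in Lemma~\ref{L2}.

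Next I would split into the two cases of the statement. If $\langle S\rangle=F_n$ then $U=F_n\times\langle t\rangle=B$ and the first alternative holds. Otherwise $\langle S\rangle\ne F_n$ and Lemma~\ref{L2} yields $T=\{h_lx_{p_l}h_l^{-1}\mid 1\le l\le m\}$ with $m\le k$ freely generating $\langle S\rangle$ and with $\{j_1,\dots,j_k\}=\{p_1,\dots,p_m\}$; since every $j_i\le n$ this forces $p_l\le n$, so conclusions (1) and (2) are immediate.

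For the intersection statements (3) and (4) I would again write $g=wt^r$ and use centrality of $t$ to get $gC_ig^{-1}=\langle wx_iw^{-1}\rangle\times\langle t\rangle$ and $gPg^{-1}=\langle wx_{n+1}w^{-1}\rangle\times\langle t\rangle$. Since $U=\langle S\rangle\times\langle t\rangle$ and all of these subgroups respect the direct product $B=F_n\times\langle t\rangle$, the intersections split off the common $\langle t\rangle$ factor: $gC_ig^{-1}\cap U=(\langle wx_iw^{-1}\rangle\cap\langle S\rangle)\times\langle t\rangle$, and similarly for $P$. The problem thus reduces to deciding whether the maximal cyclic subgroup $\langle wx_iw^{-1}\rangle$ (resp.\ $\langle wx_{n+1}w^{-1}\rangle$) of $F_n$ meets $\langle S\rangle$ nontrivially.

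The decisive point is then a short free-group computation. Suppose $wx_i^aw^{-1}\in\langle S\rangle$ for some $a\ne 0$; this element is peripheral, so by Lemma~\ref{L2}(\ref{con3}) it is $\langle S\rangle$-conjugate to a power of some $h_lx_{p_l}h_l^{-1}$. Writing out this conjugacy and using that in a free group an identity $c^a=d^b$ between elements that are not proper powers forces $\langle c\rangle=\langle d\rangle$, together with the facts that the distinct boundary generators $x_1,\dots,x_{n+1}$ are pairwise non-conjugate and none is conjugate to its inverse, I would conclude $i=p_l$, that $wx_iw^{-1}=gx_ig^{-1}$ itself lies in $\langle S\rangle$, and that it is $U$-conjugate to $h_lx_{p_l}h_l^{-1}$; hence $\langle wx_iw^{-1}\rangle\le\langle S\rangle$ and alternative (3b) holds, while if no such $a$ exists we are in (3a). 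For (4) the identical comparison applies, except that now the required conjugacy would be $x_{n+1}\sim x_{p_l}^{\pm 1}$ with $p_l\le n$, which is impossible; hence $\langle wx_{n+1}w^{-1}\rangle\cap\langle S\rangle=1$ and $gPg^{-1}\cap U=\langle t\rangle$ always. I expect this conjugacy-class bookkeeping — pinning down $i=p_l$ and excluding the index $n+1$ — to be the only genuinely delicate step; everything else is the centrality-driven reduction to Lemma~\ref{L2}.
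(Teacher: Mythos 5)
Your proposal is correct and follows essentially the same route as the paper: use centrality of $t$ to rewrite the generators inside $F_n$, split off the $U=B$ case when $\langle S\rangle=F_n$, apply Lemma~\ref{L2} to get $T$, and then settle (3) and (4) by the peripheral-conjugacy statement of Lemma~\ref{L2}(3) together with the fact that nontrivial powers of $x_i$ and $x_j$ are non-conjugate in $F_n$ for $i\neq j\in\{1,\dots,n+1\}$. The only cosmetic difference is that you make the splitting of the intersections along the direct factor $\langle t\rangle$ explicit, which the paper leaves implicit.
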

\begin{proof} 
For any $g=w\cdot t^z\in B$ and $1\le i\le n+1$ we have $$gx_ig^{-1}=wx_{i}w^{-1}.$$  Hence $S$ can be rewritten in the form  
$$S=\{w_1x_{j_1}w_1^{-1},\ldots, w_kx_{j_k}w_k^{-1}\}$$ with $w_i\in F_n$ for $1\le i\le k$
and so $\langle S\rangle\leq F_n=\langle x_1,\ldots,x_n\rangle$.   If $\langle S\rangle=F_n$ then $U=B$; thus we may assume that $\langle S\rangle$ is a proper subgroup of $F_n$.  By Lemma \ref{L2} there exists 
$$ T=\{ h_1x_{p_1}h_1^{-1},\ldots, h_{m}x_{p_{m}}h_{m}^{-1}\} $$
with $m\le k$ and $h_l \in F_n$ and $p_l\in \{1,\ldots,n\}$ for $1\le l\le m$ such that $U$ is freely generated by $T$ and that (1) and (2) are satisfied. We will show that $T$ satisfies (3)-(4).

(3) Let $g\in B$ and $i\in\{1,\ldots ,n\}$. Clearly  we have 
$$\langle t\rangle\subseteq gC_ig^{-1}\cap U.$$ 
If $\langle t\rangle= gC_ig^{-1}\cap U$ there is nothing to show. Thus we may assume that   $\langle t\rangle \varsubsetneq gC_ig^{-1}\cap U$.

It follows that $gx_i^zg^{-1}\in U$ for some $z\neq 0$. This clearly implies that  $gx_i^zg^{-1}\in \langle S\rangle$. It follows from Lemma~\ref{L2}(3) that $gx_{i}g^{-1}\in \langle S\rangle$  and that $gx_{i}g^{-1}$ is in $\langle S\rangle$ conjugate to ${h}_lx_{p_l}h_l^{-1}$ for some $l\in \{1,\ldots,m\}$, thus we have shown that $T$ satisfies~(3).

(4) For any $g=w\cdot t^z\in B$ we have 
$$\langle t\rangle \subseteq gPg^{-1}\cap U.$$%\le  g\langle x_{n+1} \rangle g^{-1}\times \langle t\rangle.$$ 

If $\langle t\rangle \varsubsetneq gPg^{-1}\cap U$ then  $gx_{n+1}^zg^{-1}=wx_{n+1}^zw^{-1}\in U$ for some integer $z\neq 0$. It follows that \color{black} $gx_{n+1}^zg^{-1}\in \langle S\rangle\leq U$.    Lemma~\ref{L2} implies that   $wx_{n+1}^zw^{-1}$ is in $\langle S\rangle$   conjugate to  $h_{l }x_{p_{l}}^z{h}_{l }^{-1}$ for some $l$. But this is impossible as in $F_n$  the element    $x_i^z$ is not   conjugate to $x_j^z$ for $i\neq j\in \{1,\ldots,n+1\}$. 
\end{proof}

%-------------------graph of groups splitting------------------------------------

\section{Proof of the main theorem}

 In this section we give the proof Theorem~\ref{main} or equivalently of Theorem~\ref{main2}. We will show that for any knot $\K$ from $\mathcal K$ the meridional rank $w(\K)$ is bounded from below  by the bridge number that is  given by Lemma~\ref{bridgenumber}.

\smallskip The general idea of the proof is similar to the proof of Grushko's theorem. Clearly there is an epimorphism from the free group of rank $w(\K)$ to $G(\K)$ that maps any basis element (of some fixed basis) to a conjugate of the meridian. This epimorphism can be a realized by a morphism of graphs of groups, see construction of $\mathcal B_0$ below.  Such a morphism can be written as a product of folds and the main difficulty of a proof is to define a complexity that does not increase in the folding sequence such that comparing the complexities of the initial and the terminal graph of groups yields the claim of the theorem.

\smallskip Morphisms of graphs of groups were introduced by Bass \cite{B}, we will use the related notion of $\mathbb A$-graphs as presented in \cite{RW} which in turn a slight modification of the language developed in \cite{KMW}, in fact we assume complete familiarity with the first chapted of \cite{RW} which in particular contains a detailed description of folds as introduced by Bestvina and Feighn \cite{BF} in the language of $\mathbb A$-graphs.

\smallskip The proof is by contradiction, thus  we assume that the knot group  $G(\K)$ of $\K$ is generated by $l:=w(\K)<b(\K)$ meridians, namely  by $g_1mg_1^{-1},\ldots,g_lmg_l^{-1} $ where $g_i\in G(\K)$ for $i=1,\ldots, l$.    Since  $G(\K)$ splits as $\pi_1(\A,v_0)$ where $\A$ is the tree of groups  described in Section~\ref{DSF},  we see that    for $1\leq i \leq l$ the element $g_i$ can be written as $g_i=[\gamma_i]$ where $\gamma_i$ is an $\A$-path from $v_0$ to $v_0$ of the form 
$$\gamma_i= a_{i,0},e_{i,1},a_{i,1},e_{i,2},\ldots,a_{iq_i-1},e_{i,q_i},a_{i,q_i}$$
for some $q_i\geq 1$. Observe that we do not require $\gamma_i$ to be reduced, otherwise we could possibly not choose $\gamma_i$ such that $q_i\ge 1$. 
  
\medskip We now define the $\mathbb{A}$-graph $\B_0$ as follows:
\begin{enumerate}
\item The underlying graph $B_0$ is a finite tree given by:
\subitem(a)  $EB_0 :=\lbrace f_{i,j}^{\varepsilon} \ \vert \ 1\leq i\leq l, 1\leq j\leq q_i,  \varepsilon\in \{-1,+1\}  \rbrace$.
\subitem(b)  $VB_0:= \{u_0\}\cup  \lbrace u_{i,j}  \ \vert \  1\leq i\leq l, 1\leq j\leq  q_i  \rbrace$.
\subitem(c) For $1\le i\le l$ the initial vertex of $f_{i,j}$ is
$$
\alpha(f_{i,j})=\left\{
\begin{array}{lcl}
u_0 & \text{if} &  j=1.  \\
u_{i,j-1} & \text{if}  & j>1. \\
 \end{array}
\right.
$$
while the  terminal vertex of $f_{i,j}$ is  $\omega(f_{i,j})=u_{i,j}$ for  $1\le j\le q_i$. 
\item  The graph morphism  $[\cdot]:B_0\rightarrow A$ is given by $[f_{i,j}^{\varepsilon}]=e_{i,j}^{\varepsilon}$.
%\item For all $f\in EB_0$ the edge group $B_f\leq A_{[f]}$ is trivial.
\item For each $u\in VB_0\cup EB_0$ the associated group is
$$
B_x=\left\{
\begin{array}{lcl}
\langle m_{v_0}\rangle& \text{if} &  x=u_{i,q_i} \text { for some } 1\leq i\leq l.  \\
1 &   & \text{ otherwise. }  \\
 \end{array}
\right.
$$

 %$B_{u_{i,q_i}}=\langle m\rangle \leq A_{v_0}$ for $1\leq i\leq l$, and $B_u=1$ otherwise.
\item For $1\leq i\leq l$,   $(f_{i,j})_{\alpha} = a_{i,j-1} $ for all $1\leq j\leq q_i$ while 
$$
(f_{i,j})_{\omega}=\left\{
\begin{array}{lcl}
1 & \text{if} &  j<q_i.  \\
a_{i,q_i} & \text{if}   & j=q_i. \\
 \end{array}
\right.
$$
%$(f_{i,j})_{\omega}=1$ for  $j< q_i$ while $(f_{i,q_i})_{\omega}= a_{i,q_i} $.
\end{enumerate} 
 
Observe that the   fundamental group of the associated graph of groups $\mathbb{B}_0$ of $\B_0$  is  freely  generated by the elements 
$$y_i:=[1,f_{i,1},1,\ldots,f_{i,q_i},m_{v_0},f_{i,q_i}^{-1},\ldots,1,f_{i,1}^{-1},1]$$
for $1\leq i\leq l$.  Additionally, the induced   homomorphism $\phi:\pi_1(\mathbb{B}_0,u_0)\rightarrow \pi(\A,v_0)$ is surjective  as   $g_img_i^{-1}=\phi(y_i)$  by our construction of $\mathcal{B}_0$. 

\medskip Before we continue with  the proof we need some terminology.  Let $\B$ be an arbitrary $\A$-graph whose  underlying graph $B$ is a finite tree. Throughout the proof we assume that $B$  has the orientation $EB_+:=\{f\in EB \ | \ [f]\in E(A,v_0)\}$.

\smallskip We say that a vertex $u\in VB$ is \textit{isolated} if $B_f=1$ for any $f\in St_{+}(u,B)$.%=\{f\in EB_{+} \ \vert \ \alpha(f)=u\}$. 

%----------------------definition full vertex----------------------
\smallskip We further say that a vertex $u\in VB$ is \textit{full} if there exists a sub-tree $B^{\prime}$ of  $B(u)$ such that the following hold:
\begin{enumerate} 
\item $u\in VB'$.
\item  $[\cdot] :B \rightarrow A$ maps $B'$ isomorphically   onto $A([u])$.
\item $B_{x}=A_{[x]}$ for all $x\in VB'\cup EB'$.
\end{enumerate}
%---------------------------------------------------

The following lemma  follows immediately from the definition of fullness: 
\begin{lemma}{\label{FV}}
Let $\B$ be an $\A$-graph whose underlying graph is a finite tree and $u\in VB$. Then $u$ is full if and only if $B_u=A_{[u]}$ and  there exists   $S_{+}\subseteq St_{+}(u,B)$ such that the following hold:
\begin{enumerate}
\item $[\cdot]|_{S_{+}} :S_{+}\rightarrow St_{+}([u],A)$ is a bijection.
\item $B_{e}=A_{[e]}$ for all $e\in S_{+}$.
\item $\omega(e)$ is full for all $e\in S_{+}$.
\end{enumerate}    
\end{lemma}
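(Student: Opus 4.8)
The plan is to prove the two implications of the equivalence directly, by manipulating the witnessing subtree from the definition of fullness and using throughout the recursive structure of rooted subtrees. The single structural fact I would isolate first is that for any vertex $v$ of a rooted tree one has the root-plus-branches decomposition $VA(v)=\{v\}\cup\bigcup_{e\in St_{+}(v,A)}VA(\omega(e))$, the union being disjoint because $A$ is a tree; moreover $A(\omega(e))$ is precisely the branch of $A(v)$ hanging below the edge $e$, and the star of the root satisfies $St_{+}(v,A(v))=St(v,A(v))=St_{+}(v,A)$. The analogous statement holds for $B$ with its orientation $EB_{+}$. Both directions are then bookkeeping on top of this decomposition, and no induction is formally needed: the definition of fullness is non-recursive, and the recursive-looking condition $(3)$ only asks for the \emph{existence} of witnessing subtrees for the children.

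For the forward implication I would start from a subtree $B'\subseteq B(u)$ witnessing that $u$ is full. The equality $B_{u}=A_{[u]}$ is immediate since $u\in VB'$. I would then set $S_{+}:=\{f\in EB'\mid \alpha(f)=u\}$. Because $[\cdot]$ restricts to an isomorphism $B'\to A([u])$ and the star of the root of $A([u])$ equals $St_{+}([u],A)$, the map $[\cdot]|_{S_{+}}:S_{+}\to St_{+}([u],A)$ is a bijection with $S_{+}\subseteq St_{+}(u,B)$, giving $(1)$; condition $(2)$ is inherited from $B_{x}=A_{[x]}$ on $EB'$. For $(3)$, fix $e\in S_{+}$ and put $w=\omega(e)$; I would take $B''$ to be the component of $B'$ containing $w$ after deleting the open edge $e$. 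Since $[\cdot]$ carries $B''$ onto the branch of $A([u])$ below $[e]$, which is exactly $A([w])$, and since $B''\subseteq B(w)$ (everything in $B''$ is reached from $w$ along oriented edges), $B''$ witnesses that $w$ is full.

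For the converse I would glue. Given $B_{u}=A_{[u]}$ and $S_{+}$ as in the statement, for each $e\in S_{+}$ choose a subtree $B'_{e}\subseteq B(\omega(e))$ witnessing the fullness of $\omega(e)$, and set $B':=\{u\}\cup\bigcup_{e\in S_{+}}\big(\{e\}\cup B'_{e}\big)$. The edges in $S_{+}$ have pairwise distinct terminal vertices, since their $[\cdot]$-images are distinct, so the branches $B(\omega(e))$, and hence the $B'_{e}$, are pairwise disjoint; thus $B'$ is a subtree of $B(u)$ containing $u$. Using the root-plus-branches decomposition of $A([u])$ together with the bijection in $(1)$ and the isomorphisms $B'_{e}\xrightarrow{\ [\cdot]\ }A(\omega([e]))$, I would check that $[\cdot]$ maps $B'$ bijectively, hence isomorphically, onto $A([u])$. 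Finally $B_{x}=A_{[x]}$ holds on all of $VB'\cup EB'$: at $u$ by hypothesis, on $S_{+}$ by $(2)$, and on each $B'_{e}$ by the choice of witnessing subtree. Hence $u$ is full.

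The part I expect to require the most care is not conceptual but the orientation and incidence bookkeeping: verifying that $St_{+}(v,A(v))=St_{+}(v,A)$ so that the stars at the root match correctly, that the branch of $A([u])$ below an edge really is $A(\omega([e]))$, and---in the converse---that gluing disjoint branches along distinct edges yields a tree on which $[\cdot]$ is simultaneously injective and surjective onto $A([u])$. Once the rooted-subtree decomposition is stated cleanly these verifications become routine.
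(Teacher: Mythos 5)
Your proof is correct. The paper offers no proof of this lemma at all---it is stated as following immediately from the definition of fullness---and your argument (extracting $S_{+}$ as the star of $u$ in the witnessing subtree for the forward direction, and gluing the witnessing subtrees of the vertices $\omega(e)$ along the edges of $S_{+}$ for the converse, with the disjointness of the branches $B(\omega(e))$ and the inclusion $B'\subseteq B(u)$ justified via uniqueness of reduced paths and the compatibility of the orientations $EB_{+}$ and $E(A,v_0)$) is precisely the verification the authors leave to the reader.
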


%------------------------------------------------------
\begin{lemma}{\label{LFull}}
Let $\B$ be an $\A$-graph whose underlying graph is a finite tree. Assume that  $\B_1$ is obtained  from $\B$ by a fold. Then the image of any full vertex in $\B$ is full in $\B_1$.
\end{lemma}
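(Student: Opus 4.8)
The plan is to push the witnessing subtree of a full vertex forward along the fold and check that it still witnesses fullness. So let $u\in VB$ be full, and let $B'\subseteq B(u)$ be a subtree as in the definition of fullness: thus $u\in VB'$, the morphism $[\cdot]\colon B\to A$ maps $B'$ isomorphically onto $A([u])$, and $B_x=A_{[x]}$ for all $x\in VB'\cup EB'$. Write $p\colon B\to B_1$ for the map on underlying graphs induced by the fold, and set $u_1:=p(u)$ and $B'_1:=p(B')$. I claim that $(u_1,B'_1)$ satisfies conditions (1)--(3) of the definition of fullness, which is exactly what must be shown.

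First I would record the structural features of a fold that the argument uses, all of which are part of the description of folds in \cite{RW}. A fold is a morphism of $\A$-graphs, so the induced map $p$ on underlying graphs is compatible with the $A$-structure, i.e. $[p(x)]=[x]$ for all $x\in VB\cup EB$; in particular $[u_1]=[u]$ and $A([u_1])=A([u])$, and $p$ carries positively oriented edges to positively oriented edges. Moreover $p$ identifies only edges (respectively vertices) that already have the same image under $[\cdot]$: the only underlying-graph identifications occur when a type~I fold identifies a pair of edges $e_1,e_2$ with $[e_1]=[e_2]$, together with possibly their terminal vertices, which then satisfy $[\omega(e_1)]=[\omega(e_2)]$; the remaining (group-enlarging) folds leave the underlying graph unchanged. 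Finally, no fold decreases a vertex or edge group: for each $y\in VB_1\cup EB_1$ the group $(B_1)_y$ contains the image of $B_x$ for every $x\in p^{-1}(y)$, while $(B_1)_y\le A_{[y]}$ by the definition of an $\A$-graph.

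Condition (1), namely $u_1\in VB'_1$, is immediate from $u\in VB'$. For condition (2) the key point is that $p|_{B'}$ is injective: since $[\cdot]|_{B'}$ is injective and $p$ identifies only elements with equal $[\cdot]$-image, $p(x)=p(x')$ for $x,x'\in B'$ forces $[x]=[x']$ and hence $x=x'$. In particular at most one of any folded pair $e_1,e_2$ can lie in $EB'$, as two distinct edges of $B'$ cannot share an image. Consequently $p$ maps the tree $B'$ isomorphically onto $B'_1$, and because $p$ respects orientation we have $B'_1\subseteq B_1(u_1)$; combining with $[p(x)]=[x]$ shows that $[\cdot]$ maps $B'_1$ isomorphically onto $A([u_1])=A([u])$, which is condition (2).

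For condition (3), let $y\in VB'_1\cup EB'_1$ and pick $x\in VB'\cup EB'$ with $p(x)=y$, so $[y]=[x]$. Fullness of $u$ gives $B_x=A_{[x]}$, and the structural facts above give $(B_1)_y\supseteq$ (the image of) $B_x=A_{[x]}$ together with $(B_1)_y\le A_{[y]}=A_{[x]}$; hence $(B_1)_y=A_{[y]}$, which is condition (3). The step that genuinely requires care, and which I expect to be the main obstacle, is the type~I fold that merges a terminal vertex $w_1=\omega(e_1)\in VB'$ with a vertex $w_2=\omega(e_2)\notin VB'$: one must confirm that the vertex group newly created at $p(w_1)=p(w_2)$, generated by $B_{w_1}$ together with an edge-group conjugate of $B_{w_2}$, still equals $A_{[w_1]}$. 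This is forced by $[w_1]=[w_2]$ and $B_{w_1}=A_{[w_1]}$, but verifying that this bookkeeping runs through uniformly is the delicate part, and I would organise the write-up as a short case analysis over the fold types of \cite{RW}, treating the group-enlarging folds (where $p$ is an isomorphism of underlying graphs) as the trivial case and the edge-identifying folds via the computation just described.
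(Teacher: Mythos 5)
Your proposal is correct and follows essentially the same route as the paper: both arguments push the witnessing subtree $B'$ forward through the fold and check that it still witnesses fullness. The paper merely packages your hands-on verification (injectivity of $p|_{B'}$ and preservation of the groups $B_x=A_{[x]}$) into the single observation that the $\A$-subgraph carried by $B'$ is already folded, and a folded $\A$-subgraph maps isomorphically onto its image under any fold.
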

\begin{proof}
First note that  if $\B'$ is  a folded $\A$-subgraph of $\B$, then it is easy to see that  $\B'$ is isomorphic to its image in   $\B_1$.  Assume that $u\in VB$ is a full vertex of $\B$. By definition there exists a sub-tree $B'\subseteq A(u)$ such that:
\begin{enumerate}
\item $u\in B'$.
\item $[\cdot]|_{B'}:B'\rightarrow A([u])$ is an isomorphism.
\item $B_x=A_{[x]}$ for all $x\in VB'\cup EB'$. 
\end{enumerate}
The $\A$-subgraph $\B'$ of $\B$  having $B'$ as its  underlying graph   is trivially folded. Thus  $\B'$ is isomorphic to its image in  $\B_1$ and so the image of $u$ in $\B_1$ is full. \end{proof}

%--------------definition of tame A-graph ----------------------------------------------

\begin{definition}{\label{DT}} We call an $\A$-graph $\B$ with associated graph of groups $\mathbb{B}$  \textit{tame} if the graph $B$  underlying   $\B$ is a finite tree and  the following conditions hold: 
\begin{enumerate}
\item[(1)] For each $f\in EB_{+}$ with $e:=[f]$ one of the following holds:
\begin{enumerate}
\item $B_f=1$.
\item $B_f=\langle m_{e }\rangle$.
\item $B_f=A_{e}=\langle m_{e},l_{e}\rangle$ and $\omega(f) \in VB$ is full.  
\end{enumerate}

\item[(2)] For every vertex $u\in VB$ with  $v:=[u]\in V_0$  one of the following holds:
\begin{enumerate}
\item   $B_u$ is generated by $r_u<b(\K_v)$ meridians of $\K_v$.
\item $B_u=A_v=G(\K_v)$.% and $u$.
\end{enumerate}

\item[(3)] For every vertex $u\in VB$ with $v:=[u]\in V_1$ one of the following holds:
\begin{enumerate}
\item   $B_u$  is freely generated by finitely many conjugates of $m_v$. 
\item  $B_u=A_v$ and $u$ is full.
\end{enumerate}

\item[(4)] For every vertex $u\in VB$ with $v:=[u]\in V_2$ one of the following holds:
\begin{enumerate}
\item  $B_u=1$.
\item  There exists  $S_u\subseteq \{f\in St_{+}(u,B) \ | \ B_f=A_{[f]}\}$ such that  $B_u=F_u\times \langle m_v\rangle$, where $F_u$ is freely generated by 
$$\{ g_{f} l_{[f]} g_{f}^{-1} \ | \ f\in S_u\}$$
where $g_{f}\in A_v$ and  $f_{\alpha}l_{[f]}f_{\alpha}^{-1}$ is in $B_u$ conjugated to $g_{f}l_{[f]}g_{f}^{-1}$ for all $f\in S_u$. Moreover, if  $B_u=A_v$  then $u$ is full. 

Put  $[S_u]:=\{[f] \ | \ f\in S_u\}\subseteq St_{+}(v,A)$.
\end{enumerate}
\end{enumerate}
\end{definition}

%------------complexity formula-------------------------------------

Next we will define the complexity of a tame $\A$-graph $\B$. First we need to introduce the following  notion.  We define the \emph{positive height} of a vertex $v\in VA$ as 
$$
h_{+}(v):=h(v)n(v) $$  
where $h(v)$ denotes the height of a vertex in $A$ defined in Section 4.   Note that for any edge $e\in E(A,v_0)$ we have $h_+(\alpha(e))=h(\omega(e))$. In particular we have $h_+(v)=h(v)$ if $v\in V_2$ and  $h_+(v)=h(v)\cdot n_v$ if $v\in V_1$.

\begin{definition}{\label{Comp}}
Let $\B$ be a tame $\A$-graph.

We define \emph{the $c_1$-complexity} of $\B$ as 
$$ c_1(\B):= \sum_{\substack{u\in VB  \\   u \text{ isol.} }}h([u])\cdot w(B_u) -\sum_{\substack{ u\in VB\\ u \text{ isn't isol.}}} h_{+}([u])\cdot (val_{+}^{1}(u ,\B)-1)$$
where $val_+^1(u,\B):=|\{f\in St_+(u,B) \ | \ B_f\neq 1 \}|$.

We further define the $c_2$-complexity of $\B$ as
$$c_2(\B):=2|EB|-|E_2B|-\dfrac{1}{2}|E_1B|$$  
where  $E_iB\subset EB$ denotes the set of edges whose edge group is isomorphic to $\mathbb{Z}^i$ for $i=1,2$.

Lastly, the $c$-complexity of $\B$ is defines as
$$ c(\B)=(c_1(\B),c_2(\B))\in \mathbb{N}\times \mathbb{N} .$$    

\end{definition}

As   we want to compare complexities we endow the set $\mathbb{N}\times \mathbb{N}$ with the lexicographic order, i.e. we write  $(n_1,m_1)<(n_2,m_2)$ if one of the following occurs:
\begin{enumerate}
\item $n_1< n_2$.
\item $n_1=n_2$ and $m_1< m_2$.
\end{enumerate}

Observe that the $\mathbb A$-graph $\B_0$ defined above is   tame   as $B_x=\{1\}$ for all $x\in VB_0\cup EB_0 \setminus \{u_{i,q_i}  \vert  1\leq i \leq l\}$ and $B_{i,q_i}\leq A_{v_0}$ is infinite cyclic generated be the meridian $ m  $ of $\K$. Furthermore, $c_1(\B_0)=l<b(\K)$. Thus our assumption (which will yield a contradiction) implies that there exists a tame $\A$-graph $\B_0$ with $c_1$-complexity strictly smaller than the bridge number of $\K=\K_{\mathcal{A}}$ such that the induced homomorphism 
$$\phi: \pi_1(\mathbb{B}_0,u_0)\rightarrow \pi_1(\A,v_0)$$ 
is an epimorphism.

\smallskip Let now $\B$ be a tame $\A$-graph such that there exists a vertex $u_0\in VB$ of type $v_0$ such that $\phi:\pi_1(\mathbb{B},u_0)\rightarrow \pi_1(\A ,v_0)$ is surjective and that among all such $\B$ the $c$-complexity is minimal. Note that $c_1(\B)\leq l<b(\K)$ as the $\A$-graph  $\B_0$  constructed above is tame and the map $\phi$ is sujective. We will prove the theorem by deriving a contradiction to this minimality assumption.  

%---------------------Lemma foldness of B---------------------------------------------------------

 \begin{lemma} $\mathcal B$ is not folded.
 \end{lemma}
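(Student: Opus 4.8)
We must show that the minimal-complexity tame $\mathbb A$-graph $\mathcal B$ (with $\phi$ surjective) cannot be folded.

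**The strategy.** The plan is to argue by contradiction: suppose $\mathcal B$ is folded. Then I would extract strong structural consequences from foldedness together with the surjectivity of $\phi$, and show that these consequences force $c_1(\mathcal B)\ge b(\mathcal K)$, contradicting the minimality estimate $c_1(\mathcal B)\le l<b(\mathcal K)$ recorded just before the lemma.

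**Key steps.** First I would recall the standard principle for folded $\mathbb A$-graphs: if $\mathcal B$ is folded then the induced map $\phi\colon\pi_1(\mathbb B,u_0)\to\pi_1(\mathbb A,v_0)$ is injective on vertex and edge groups and the graph morphism $[\cdot]\colon B\to A$ is locally injective in the relevant sense; surjectivity of $\phi$ then promotes to a far more rigid statement about the image. Concretely, I would aim to prove that when $\mathcal B$ is folded and $\phi$ is onto, the base vertex $u_0$ must be \emph{full} in the sense of Lemma~\ref{FV}, and more generally that foldedness forces each vertex group $B_u$ to be as large as its target $A_{[u]}$ whenever $u$ lies on the full subtree. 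The engine here is that a surjective morphism from a folded $\mathbb A$-graph realizes the whole graph of groups decomposition $\mathbb A$, so the tree $B$ must contain an isomorphic copy of $A$ with matching vertex and edge groups.

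Second, using Corollaries~\ref{C1} and \ref{C3} I would control the local picture at each vertex of type $V_1$ and $V_2$: foldedness rules out the situation in which a proper meridional subgroup surjects onto the whole vertex group, so at folded vertices the tameness alternative (b) (i.e.\ $B_u=A_v$ and $u$ full) must be the one that holds. The same corollaries, via conclusion (3) on peripheral intersections, guarantee that the incoming edge groups are carried injectively, so no collapsing occurs across the tori corresponding to edges of $A$. Feeding this back into the definition of $c_1(\mathcal B)$ in Definition~\ref{Comp}, the isolated-vertex contributions $h([u])\cdot w(B_u)$ reassemble exactly into the bridge-number formula of Lemma~\ref{bridgenumber}, while the $-h_+([u])(val_+^1(u,\mathcal B)-1)$ terms match the subtracted connected-sum corrections; hence $c_1(\mathcal B)=b(\mathcal K)$, contradicting $c_1(\mathcal B)<b(\mathcal K)$.

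**Main obstacle.** The hard part will be the bookkeeping that turns \emph{foldedness plus surjectivity} into the statement ``$u_0$ is full and every vertex group equals its target.'' Surjectivity of $\phi$ is a statement about $\pi_1$, and one must descend from it to the combinatorial fullness condition without accidentally assuming what is to be proved; the presence of composing-space vertices ($V_2$) is exactly where this is delicate, since there the vertex group is $F_v\times\langle m_v\rangle$ and one must keep track of the central factor $\langle t\rangle=\langle m_v\rangle$ separately (compare conclusion (4) of Corollary~\ref{C3}) to ensure the height-weighted count is exact rather than merely an inequality. I expect the cleanest route is to prove the fullness of $u_0$ first, using Lemma~\ref{LFull} only for monotonicity, and then read off the complexity identity vertex by vertex along the full subtree.
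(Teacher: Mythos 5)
Your proposal follows essentially the same route as the paper: assuming $\mathcal B$ is folded, injectivity of $\phi$ for folded $\mathbb A$-graphs together with surjectivity forces $\mathbb{B}\cong\mathbb{A}$ (graph morphism bijective, all vertex and edge groups matching), whence $c_1(\B)$ evaluates to exactly the bridge-number formula of Lemma~\ref{bridgenumber}, contradicting $c_1(\B)<b(\K)$. The one step you leave implicit is that matching the isolated-vertex contributions with $\sum_{v\in V_0}h(v)\,b(\K_v)$ requires the meridional tameness of the knots $\K_v$, $v\in V_0$, to pass from $w(B_u)=w(G(\K_{[u]}))$ to $b(\K_{[u]})$; the detour through Corollaries~\ref{C1} and~\ref{C3} is unnecessary once the isomorphism is in hand.
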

 
 \begin{proof}
Assume that $\mathcal B$ is folded. As the map $$\phi:\pi_1(\mathbb{B},u_0)\rightarrow \pi_1(\A,v_0)$$  is surjective it follows that $\mathbb{B}$ is isomorphic to $\mathbb A$, i.e. the graph morphism is bijective and that all vertex and edge groups are mapped bijectively. Observe that this implies the following:
\begin{enumerate}
\item The morphism $[\cdot]$ maps the isolated vertices of $VB$ bijectively to $V_0\subset VA$. It follows in particular that for any isolated vertex $u\in VB$ we have $w(B_u)=w(A_{[u]})=w(G(\K_{[u]}))=b(\K_{[u]})$ as $\K_{[u]}$ is meridionally tame by assumption.
\item As all edge groups are non-trivial we have $val_+^1(u,\B)=val_{+}(u, B)=val_{+}([u],A)$ for all $u\in VB$.
\item If $u\in VB$ is not isolated, then either $[u]\in V_1$ and therefore $val_+^1(u,\mathcal B)-1=0$ or $[u]\in V_2$ and therefore $h_+([u])=h([u])$.
\end{enumerate}  Using Lemma~\ref{bridgenumber} this implies that  
$$b(\K)=\bigg[\sum_{v\in V_{0}}h(v)\cdot b({\K}_v)\bigg ]-\bigg[\sum_{v\in V_{2}}h(v)\cdot (val_{+}(v,A)-1)\bigg]=$$ 
$$=\sum_{\substack{u\in VB  \\   u \text{ isol.} }}h([u])\cdot w(B_u) -\sum_{\substack{ u\in VB\\ u \text{ isn't isol.}}} h_{+}([u])\cdot (val_{+}^{1}(u ,\B)-1)=c_1(\mathcal B)$$ contradicting the assumption that $c_1(\mathcal B)<b(\K)$. 

Thus $ \mathcal B$ is not folded.
\end{proof}

%--------------------------------------------------------------------------------------------

As the   $\A$-graph $\B$ is not folded, a fold can be applied to $\mathcal B$.  As the graph $B$ underlying $\B$ is a tree it follows that only folds of type IA and IIA can occur. We will only apply folds of type IIA if no fold of type IA can be applied. As any fold is a composition of finitely many auxiliary moves (that clearly preserve tameness and  do not change the complexity) and an elementary move we can assume that one of the following holds:%. Since the graph $B$ underlying $\B$ is a tree it follows that only folds of type IA and IIA can occur, more precisely we can assume that one of the following holds:
\begin{enumerate}
\item An elementary move of type IA can be applied to $\B$.
\item No fold of type IA can be applied to $\B$ but an elementary move of type IIA can be applied to $\B$. 
\end{enumerate}

\smallskip In both cases we will derive the desired contradiction to the minimality of  the complexity  $\B$ by producing a tame $\A$-graph $\mathbb B''$  that is $\pi_1$-surjective  and such that  $c(\B'')<c(\B)$.

\smallskip The following lemma  will be useful when considering folds. It implies that certain type IIA folds  are only possible if also a fold of type IA is possible. Because of our above choice we will therefore not need to consider such folds  of type IIA.

%----------------Lemma intersection groups----------------------------------

\begin{lemma}\label{L7}
Let $\B$ be a tame $\A$-graph, $u\in VB$ and $v:=[u]$. Suppose that one of the following holds:
\begin{enumerate}
\item  $v\in V_2$ and there exists  $f\in St_+(u,B)$   labeled $(a,e,b)$ such that $B_f\neq A_e$  and  $a\alpha_e(l_e)a^{-1}=al_ea^{-1}\in B_u$.
\item $v\in V_1$, $\langle\langle m_v\rangle\rangle \leq B_u$ and there exist distinct edges $f_1,f_2\in St_{+}(u,B)$. 
\end{enumerate}
 Then we can apply a fold of type IA to $\B$.
\end{lemma}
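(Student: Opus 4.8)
The strategy is to reduce both cases to the standard criterion for a type~IA fold from \cite{RW}: a fold of type~IA can be applied to $\mathcal B$ as soon as there are two \emph{distinct} edges $f_1,f_2\in St_+(u,B)$ with a common image $[f_1]=[f_2]=:e$ whose $\alpha$-labels $(f_1)_\alpha,(f_2)_\alpha$ represent the same double coset in $B_u\backslash A_v/\alpha_e(A_e)$. Since $B$ is a tree, distinct edges issuing from $u$ have distinct terminal vertices, so the resulting fold is automatically of type~IA rather than~IB. Thus in each case it suffices to exhibit such a pair $f_1,f_2$ and to check the double-coset condition.

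For case~(2) I would argue that the double coset space collapses entirely. Since $v\in V_1$ has $val_+(v,A)=1$, the set $St_+(v,A)=\{e\}$ is a singleton, so the two given distinct edges $f_1,f_2$ automatically satisfy $[f_1]=[f_2]=e$. Recalling that $\alpha_e(A_e)=\langle x_{n+1},t\rangle$ and that $\langle\langle m_v\rangle\rangle=\langle x_1,\dots,x_n\rangle=F_n$, every element of $A_v$ has the form $w t^r$ with $w\in F_n\le B_u$ and $t^r\in\langle t\rangle\le\alpha_e(A_e)$; hence $A_v=B_u\cdot\alpha_e(A_e)$ is a single double coset. In particular $(f_1)_\alpha$ and $(f_2)_\alpha$ lie in the same double coset and the fold applies, so this case is immediate.

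Case~(1) is the substantial one and is where I expect the real work to lie. Here $v\in V_2$, so $A_v=F_v\times\langle m_v\rangle$, the element $\alpha_e(l_e)=l_e$ is a free generator of $F_v$, and $\alpha_e(A_e)=\langle l_e,m_v\rangle=:C_e$, which is precisely the centralizer of $l_e$ in $A_v$. Setting $g:=a l_e a^{-1}\in B_u$ and using that $m_v$ is central, $g$ lies in $F_v$, so $B_u\neq 1$ and tameness (Definition~\ref{DT}(4)(b)) gives $B_u=F_u\times\langle m_v\rangle$ with $F_u$ free on $\{g_f l_{[f]}g_f^{-1}\mid f\in S_u\}$ and $g\in F_u$. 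The key point is that $g$ is a conjugate of the free generator $l_e=x_{j(e)}$ and is therefore a peripheral element of $F_u\le F_v$; applying Lemma~\ref{L2}(3) when $F_u\neq F_v$, respectively fullness via Lemma~\ref{FV} when $B_u=A_v$, produces an edge $f'\in St_+(u,B)$ with $[f']=e$ and $B_{f'}=A_e$, together with the information that $g=a l_e a^{-1}$ is conjugate in $B_u$ to $(f')_\alpha\, l_e\,(f')_\alpha^{-1}$. Since $B_f\neq A_e=B_{f'}$ forces $f\neq f'$, the edges $f,f'$ are the desired distinct pair.

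It then remains to verify the double-coset condition for $f,f'$, and this is where the identification $C_e=C_{A_v}(l_e)$ does the work: from a conjugating element $h\in B_u$ with $h\,g\,h^{-1}=(f')_\alpha\,l_e\,(f')_\alpha^{-1}$ one obtains $(f')_\alpha^{-1}\, h\, a\in C_{A_v}(l_e)=\alpha_e(A_e)$, whence $a\in B_u\,(f')_\alpha\,\alpha_e(A_e)$; thus $(f)_\alpha$ and $(f')_\alpha$ represent the same double coset and a type~IA fold applies. The main obstacle, and the step demanding the most care, is the extraction of $f'$: one must separate the generic situation, where $F_u$ is a proper subgroup of $F_v$ and the peripheral-conjugacy statement of Lemma~\ref{L2}(3) applies cleanly, from the degenerate full situation $B_u=A_v$, where Lemma~\ref{L2} fails (cf.\ the Remark following it, since a basis of conjugates of the $x_i$ may omit one peripheral type) and one must instead invoke fullness of $u$ to locate an edge over $e$ carrying the full edge group $A_e$.
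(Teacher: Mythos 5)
Your proposal is correct and follows essentially the same route as the paper: in case (2) you show the double coset $B_u\backslash A_v/\alpha_e(A_e)$ collapses exactly as the paper does via $(f_2)_\alpha=w_2w_1^{-1}(f_1)_\alpha t^{z_2-z_1}$, and in case (1) you use tameness together with the peripheral-conjugacy statement (the paper cites Corollary~\ref{C3}(3.b), which is just Lemma~\ref{L2}(3) packaged for this situation) to produce a second edge $f'$ over $e$ with $B_{f'}=A_e$, then use that the centralizer of $l_e$ in $F_v\times\langle m_v\rangle$ is $\alpha_e(A_e)$ to verify the failure of condition (F1). Your explicit separation of the subcase $B_u=A_v$ (handled by fullness) from $F_u\subsetneq F_v$ matches what the paper implicitly does in invoking "Corollary~\ref{C3}(3.b) and the tameness of $\B$".
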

\begin{proof}
(1) Note first that an element $$g\in A_v=F_v\times \langle m_v\rangle=F(\{l_e \ | \ e\in St_{+}(v,A)\})\times\langle m_v\rangle$$ commutes with $l_{e}$ if and only if $g=m_v^{z_1} \cdot l_{e}^{z_2}=\alpha_e(m_e^{z_1}\cdot l_e^{z_2}) $ for  $z_1,z_2\in\mathbb Z$. This follows immediately from the fact that any maximal cyclic subgroup of $F_v$ and therefore also  $\langle l_e\rangle$ is self-normalizing in $F_v$. 

It follows from Corollary~\ref{C3}(3.b) and the tameness of $\B$ that there exists an edge $f_0\in St_{+}(u,B)$ labeled $(a_0,e,b_0)$ such that $B_{f_0}=A_e=\langle m_{e},l_{e}\rangle$ and  
$$al_{e}a^{-1}=ga_0 l_{e}a_{0}^{-1}g^{-1}$$ 
for some $g\in B_u$. Thus, $a_0^{-1}g^{-1}a$ commutes with $l_{e}$ which implies that 
$$a_0^{-1}g^{-1}a=  m_v^{z_1}\cdot l_{e}^{z_2}=\alpha_e(m_{e}^{z_1}\cdot l_{e}^{z_2})$$ 
for $z_1,z_2\in \mathbb{Z}$. Hence $a=ga_0\alpha_e(m_{e}^{z_1}l_{e}^{z_2})$. Since we  further have $f_0 \neq f$ and $[f_0]=[f]=e$ it follows that  $\B$ is not folded because condition (F1) is not satisfied, see p.615 from \cite{RW}.  Thus we can apply a fold of type I or  type III to $\B$. Since the underlying graph $B$ of $\B$ is a tree it follows that we can apply a fold of type IA to~$\B$.

(2) Since $v\in V_1$ it follows that $St_{+}(v,A)=\{e\}$ for some $e\in EA$ and so $f_1$ and $f_2$ are of the same type. Let  $(f_i)_{\alpha}=w_i\cdot t^{z_i}$  where $w_i\in \langle\langle m_v\rangle\rangle $ and $z_i\in \mathbb{Z}$ for $i\in \{1,2\}$. We can write $(f_2)_{\alpha}\in A_v=F_{n_v}\rtimes \langle t\rangle$ as 
$$(f_2)_{\alpha}= w_2w_1^{-1} \cdot (f_1)_{\alpha}\cdot t^{z_2-z_1}.$$
Note now that  $ w_2w_1^{-1} \in \langle\langle m_v \rangle\rangle \leq B_u$ and  $t^{z_2-z_1}=\alpha_e(l_e^{z_2-z_1})$ since $A_e=\langle m_e,l_e\rangle$ and $\alpha_e(l_e)=t$. Thus $\B$ is not folded because condition (F1) is not satisfied which implies in our context    we can apply a fold of type IA to $\B$.   
\end{proof}

%--------------------------------------------------------------------------------------

The following Lemma implies that if we apply an elementary move of type IIA in the direction of an oriented edge, we only ever add a meridian to the edge group. 
\begin{lemma}{\label{L8}}
Let $\B$ be a tame $\A$-graph and $f\in EB_{+}$ labeled $(a,e,b)$ with  $x:=\alpha(f)$. Assume that no  fold of type IA is applicable to $\B$(i.e. condition (F1) is satisfied) and that  
$$B_f\varsubsetneq \alpha_e^{-1}(a^{-1}B_{x}a).$$ 

Then $\alpha_e^{-1}(a^{-1}B_xa)=\langle m_{e}\rangle$. In particular,  $B_f=1$.
\end{lemma}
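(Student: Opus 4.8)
The plan is to reduce the whole statement to the single identity $\alpha_e^{-1}(a^{-1}B_xa)=\langle m_e\rangle$. Once this is proved, the \emph{in particular} is immediate: tameness condition (1) of Definition~\ref{DT} forces $B_f\in\{1,\langle m_e\rangle,A_e\}$, and the hypothesis $B_f\subsetneq\alpha_e^{-1}(a^{-1}B_xa)=\langle m_e\rangle$ leaves only $B_f=1$. I would begin by recording that the inclusion $B_f\le\alpha_e^{-1}(a^{-1}B_xa)$ holds for \emph{any} $\A$-graph, since $a\alpha_e(B_f)a^{-1}\le B_x$ is part of the definition; the hypothesis merely upgrades this to a strict inclusion. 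Because $f\in EB_+$, its image $e=[f]$ is positively oriented, so $v:=[x]=\alpha(e)$ has $val_+(v,A)\ge 1$ and hence $v\in V_1\cup V_2$. The argument then splits along this dichotomy, and in each case I would read off the structure of $B_x$ from the relevant clause of Definition~\ref{DT} and analyse how $B_x$ meets the image $\alpha_e(A_e)$ of the edge group.

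For $v\in V_1$ recall from Section~\ref{DSF} that $\alpha_e(A_e)=\langle x_{n+1},t\rangle=C_v$ and $A_v=F_n\rtimes\langle t\rangle$. Tameness~(3) leaves three subcases. If $B_x=\langle\langle m_v\rangle\rangle=F_n$, then since $F_n$ is normal and $\alpha_e(m_e^{z_1}l_e^{z_2})=x_{n+1}^{z_1}t^{z_2}$ lies in $F_n$ exactly when $z_2=0$, one reads off $\alpha_e^{-1}(a^{-1}B_xa)=\langle m_e\rangle$. If $B_x$ is a \emph{proper} subgroup generated by conjugates of $m_v=x_1$, I would invoke Corollary~\ref{C1}(3), which gives $aC_va^{-1}\cap B_x=1$ and hence $\alpha_e^{-1}(a^{-1}B_xa)=1$, contradicting strictness; so this subcase does not occur under the hypotheses. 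The remaining possibility $B_x=A_v$ with $x$ full is excluded by combining Lemma~\ref{L7}(2) (non-foldability together with $\langle\langle m_v\rangle\rangle\le B_x$ forces $St_+(x,B)=\{f\}$) with Lemma~\ref{FV} (fullness produces an edge of $St_+(x,B)$ with full edge group), which together give $B_f=A_e$, again contradicting strictness. Thus the hypotheses force the first subcase, yielding the identity.

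For $v\in V_2$ recall $A_v=F_n\times\langle m_v\rangle$ with $m_v=t$ central, and $\alpha_e(A_e)=\langle x_{i_0},t\rangle=C_{i_0}$ where $x_{i_0}=l_e$ and $i_0=j(e)$. Tameness~(4) gives either $B_x=1$, which makes $\alpha_e^{-1}(a^{-1}B_xa)=1$ and contradicts strictness, or $B_x=\langle S\rangle\times\langle t\rangle$ in the form to which Corollary~\ref{C3} applies. Applying clause~(3) of that corollary to $g=a$, $i=i_0$, the benign alternative is $aC_{i_0}a^{-1}\cap B_x=\langle t\rangle$; here, projecting $A_v\to F_n$ shows that $a\alpha_e(m_e^{z_1}l_e^{z_2})a^{-1}=t^{z_1}ax_{i_0}^{z_2}a^{-1}$ lies in $\langle t\rangle\le B_x$ precisely when $z_2=0$, which gives $\alpha_e^{-1}(a^{-1}B_xa)=\langle m_e\rangle$ as wanted.

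The main obstacle, and the step I would treat most carefully, is the other alternative of Corollary~\ref{C3}(3), namely $aC_{i_0}a^{-1}\cap B_x=a\langle x_{i_0}\rangle a^{-1}\times\langle t\rangle$, together with the degenerate case $B_x=A_v$; in both of these $\alpha_e^{-1}(a^{-1}B_xa)$ would be all of $A_e$ and the conclusion would fail. The key observation is that each of these records exactly that $al_ea^{-1}=ax_{i_0}a^{-1}\in B_x$, which combined with $B_f\ne A_e$ (forced by strictness) is precisely the hypothesis of Lemma~\ref{L7}(1); it would therefore make a fold of type~IA applicable, contrary to assumption. Hence it is the non-foldability hypothesis that rules out the dangerous configurations, and recognising that $al_ea^{-1}\in B_x$ triggers Lemma~\ref{L7}(1) is the crux of the whole argument.
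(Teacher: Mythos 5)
Your proof is correct and follows essentially the same route as the paper's: a case split on $[x]\in V_1$ versus $[x]\in V_2$, with the tameness conditions and Corollary~\ref{C1}(3) (resp.\ Corollary~\ref{C3}) pinning down $\alpha_e^{-1}(a^{-1}B_xa)$, and Lemma~\ref{L7} combined with the non-foldability hypothesis excluding the case $\alpha_e^{-1}(a^{-1}B_xa)=A_e$. The only difference is organizational (you split $B_x=\langle\langle m_v\rangle\rangle$ from its proper subgroups, where the paper merges these via Corollary~\ref{C1}(3)), which does not change the argument.
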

\begin{proof}
We first assume that $v:=\alpha(e)\in V_1$. By tameness of $\B$ we know that $B_x=A_v$ or $B_x\leq \langle \langle m_v\rangle\rangle$ is freely generated by conjugates of $m_v$. 

If $B_x=A_v$,  then the tameness of $\B$ implies that $x$ is full. Hence   there exists an edge $f_0\in St_{+}(x,B)$ such that  $B_{f_0}=A_e$. Since no folds of type IA are applicable to $\B$, it follows from Lemma~\ref{L7} that $f=f_0$ which implies that 
$$\alpha_e^{-1}(a^{-1}B_xa)=\alpha_e^{-1}(A_v)=A_e=B_f$$ 
contradicting the fact that $B_f\varsubsetneq \alpha_e^{-1}(a^{-1}B_xa)$. 

Thus $B_x\leq \langle\langle m_v\rangle\rangle$ is freely generated by conjugates of $m_v$. As by hypothesis $B_f\varsubsetneq \alpha_e^{-1}(a^{-1}B_xa)$ it implies that $\alpha_e^{-1}(a^{-1}B_xa)$ is non-trivial. It is a consequence of  Corollary~\ref{C1}(3)  that in this case  $B_x=\langle \langle m_v\rangle\rangle$ and so $\alpha_e^{-1}(a^{-1}B_xa)=\langle m_{e}\rangle$. 

\smallskip Assume now that $v\in V_2$. We know from Corollary~\ref{C3} that one of the following holds true:
\begin{enumerate}
\item  $\alpha_e^{-1}(a^{-1}B_xa)=1$
\item  $\alpha_e^{-1}(a^{-1}B_xa)=\langle m_{e}\rangle$
\item  $\alpha_e^{-1}(a^{-1}B_xa)=A_e=\langle m_{e},l_{e}\rangle$
\end{enumerate}
If $\alpha_e^{-1}(a^{-1}B_xa)=A_e $, then $al_{e}a^{-1}\in F_x$. As by assumption $B_f\varsubsetneq A_e $ we conclude from  Lemma~\ref{L7}(1)  that a fold of type IA can be applied on $\B$, a contradiction. Thus (1) or (2) occurs. As   $B_f \varsubsetneq  \alpha_e^{-1}(a^{-1}B_x a)$ it implies that $\alpha_e^{-1}(a^{-1}B_x a)$ is non-trivial and so (2) must occur.
\end{proof}

%------------------------------------------------------------------------------------------------------------

We will now deal with the two cases mentioned above. In both cases we let $\B'$ be the $\mathbb A$-graph obtained from $\B$ by the fold of type IA or IIA, respectively. After possibly applying elementary moves first we can assume that the folds are elementary.

\medskip \textbf{Case 1: An elementary move of type IA can be applied to $\B$:} Let $\B'$ be the $\mathbb A$-graph obtained from $\B$ by this elementary move. Thus there exist distinct edges $f_1,f_2\in EB$ with  $\alpha(f_i)= u\in VB$   and   $l(f_i)=(a,e,b)$ for $i\in \{1,2\}$ such that $\B{'}$ is obtained from $\mathcal B$ by identifying $f_1$ and $f_2$ into a single edge $f$. Put  $x:=\omega(f_1)$,  $y:=\omega(f_2)$,  $v:=\alpha(e)$ and $w:=\omega(e)$ and $z:=\omega(f)$. In particular we have ${B_f'}=\langle B_{f_1},B_{f_2}\rangle$ and ${B_z'}=\langle B_x,B_y\rangle$.  We further  denote by ${B'}$  the underlying graph of $\B'$, see Figure~\ref{Fig3}. 
\begin{figure}[h] 
\begin{center}
\includegraphics[scale=1]{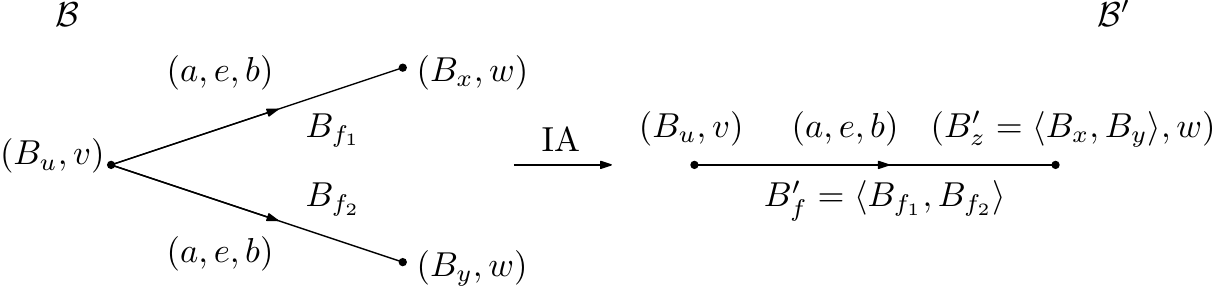}
\end{center}
\caption{An elementary  move of type IA.}\label{Fig3}
\end{figure}

Let now $\B''$ be the $\A$-graph obtained from $\B'$ by replacing the vertex group ${B_z'}$ by the group  ${B_z''}$ defined as follows:
$$
{B_z''}:=\left\{
\begin{array}{lcl}
{B_z'}=\langle B_x,B_y\rangle & \text{if} & w\in V_1\cup V_2 \text{ or } w\in V_0 \text{ and } w({B_x})+w(B_y)<b(\K_w).  \\
A_w=G(\K_w) & \text{if}  &  w\in V_0 \text{ and } w({B_x})+w(B_y)\geq b(\K_w). \\
 \end{array}
\right.
$$

From Proposition~8 of \cite{RW} it follows that $\B'$ is $\pi_1$-surjective which implies that $\B''$ is $\pi_1$-surjective since $\B''$ is defined from  $\B'$ by possibly enlarging the  vertex group at $z\in VB'$.  Note that the underlying graph $B''$ of $\B''$ is equal to  $B'$. Moreover, ${B_x''}={B_x}$ for all $x\in VB''\setminus \{z\}\cup EB''\setminus\{f,f^{-1}\}$ while  ${B_f''}={B_f'}=\langle B_{f_1},B_{f_2}\rangle$.

\begin{lemma}
The $\A$-graph $\B''$ is tame.   
\end{lemma}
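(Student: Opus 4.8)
The plan is to verify that the fold of type IA followed by the possible enlargement of the vertex group at $z$ preserves each of the four conditions in Definition~\ref{DT}. Since $\B''$ and $\B'$ share the same underlying tree $B''=B'$, and since $B_x''=B_x$ for every vertex and edge except at $z$ (and the edge $f$), the tameness conditions hold automatically at every vertex and edge unaffected by the fold. Thus I would concentrate on the new edge group $B_f''=\langle B_{f_1},B_{f_2}\rangle$ and the new vertex group $B_z''$, checking condition (1) for $f$ and the appropriate one of conditions (2)--(4) for $z$ according to whether $w:=\omega(e)$ lies in $V_0$, $V_1$, or $V_2$.

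First I would treat the edge group. Since the fold identifies $f_1,f_2$ both labeled $(a,e,b)$, we have $B_f''=\langle B_{f_1},B_{f_2}\rangle$, and because $\B$ is tame each $B_{f_i}$ is one of $1$, $\langle m_e\rangle$, or $A_e=\langle m_e,l_e\rangle$. The join of two such subgroups of the abelian group $A_e\cong\mathbb Z^2$ is again one of these three (it can only grow from $1$ toward $\langle m_e\rangle$ toward $A_e$), so condition (1)(a)--(c) is preserved for $f$; in the case $B_f''=A_e$ I must additionally check that $z$ is full, which I would deduce from fullness of $x$ or $y$ (guaranteed by tameness of $\B$ whenever one of the $B_{f_i}$ already equalled $A_e$) together with Lemma~\ref{LFull}.

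Next I would verify the vertex condition at $z$ by cases on $w=\omega(e)$. If $w\in V_0$, the definition of $B_z''$ was engineered precisely so that either $w(B_x)+w(B_y)<b(\K_w)$, giving a meridional subgroup generated by fewer than $b(\K_w)$ meridians (condition (2)(a)), or the vertex group is enlarged to all of $A_w=G(\K_w)$ (condition (2)(b)); here I would use that $w(\langle B_x,B_y\rangle)\le w(B_x)+w(B_y)$. If $w\in V_1$, then $B_z''=\langle B_x,B_y\rangle$ is generated by conjugates of $m_w$; invoking Corollary~\ref{C1}, this subgroup is either freely generated by such conjugates (condition (3)(a)) or equals $\langle\langle m_w\rangle\rangle$, and in the latter case I would need the full strength $B_z''=A_w$ with $z$ full, which forces an analysis of when $\langle\langle m_w\rangle\rangle$ can coincide with $A_w$. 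If $w\in V_2$, I would apply Corollary~\ref{C3} to the subgroup generated by the conjugates of the $x_i$ together with $\langle m_w\rangle$, extracting the free generating set $S_z$ required by condition (4)(b), and check that $m_w\in B_z''$ so that the direct-factor structure $F_z\times\langle m_w\rangle$ is present.

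The main obstacle will be the $V_1$ and $V_2$ cases, where merely taking the join of two tame vertex groups need not immediately present itself in the normal form demanded by conditions (3)(b) and (4)(b); the delicate point is ensuring that when the joined group fills out the whole vertex group $A_w$ the vertex $z$ is genuinely full, and that the free generating set provided by Corollaries~\ref{C1} and~\ref{C3} can be realized by edge labels in $St_+(z,B'')$ as required. Here I expect to lean on Lemma~\ref{L7} (no type IA fold being applicable forces the edge structure) and on the observation that fullness is inherited under folds via Lemma~\ref{LFull}, so that fullness of $x$ or $y$ in $\B$ propagates to fullness of $z$ in $\B''$.
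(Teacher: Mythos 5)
Your overall strategy---checking Definition~\ref{DT} only at the affected edge $f$ and vertex $z$, splitting into cases according to whether $w=\omega(e)$ lies in $V_0$, $V_1$ or $V_2$, and propagating fullness via Lemma~\ref{LFull}---is exactly the paper's, and your treatment of the edge group and of the $V_0$ case is correct. However, the $V_1$ case is set up incorrectly. You write that if $\langle B_x,B_y\rangle$ equals $\langle\langle m_w\rangle\rangle$ you would ``need the full strength $B_z''=A_w$ with $z$ full,'' requiring an analysis of ``when $\langle\langle m_w\rangle\rangle$ can coincide with $A_w$.'' This is backwards: $\langle\langle m_w\rangle\rangle=F_{n_w}$ is \emph{always} a proper subgroup of $A_w=F_{n_w}\rtimes\langle t\rangle$ (the quotient is infinite cyclic), and it is itself freely generated by the conjugates $x_1,\ldots,x_{n_w}$ of $m_w$, so in that event condition (3)(a) already holds and (3)(b) is never invoked. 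The observation actually needed is the converse one: since every subgroup generated by conjugates of $m_w$ lies in the proper subgroup $\langle\langle m_w\rangle\rangle$, the equality $B_z''=A_w$ can occur only when $B_x$ or $B_y$ was already $A_w$, in which case tameness of $\B$ makes that vertex full and Lemma~\ref{LFull} makes $z$ full.

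In the $V_2$ case the step you flag as delicate is precisely the content of the proof, and it is not automatic: Corollary~\ref{C3} produces a free generating set indexed by a subset $S_z''$ of the disjoint union $S_x\cup S_y$, and one must show $S_z''$ maps injectively into $St_{+}(z,B'')$. When $f_1,f_2\in EB_+$ this is clear because $St_+(x,B)$ and $St_+(y,B)$ survive disjointly, but when $f_1,f_2\notin EB_+$ the edges $f_1^{-1}$ and $f_2^{-1}$ are identified to the single edge $f^{-1}$, and one must argue---using that both carry the same label and that $F_z''$ is free on the chosen set---that $S_z''$ contains at most one of them. Without this, neither the normal form required by condition (4)(b) nor the fullness of $z$ when $B_z''=A_w$ (via Lemma~\ref{FV}) is established. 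Finally, your intended appeal to Lemma~\ref{L7} is misplaced here: its hypothesis is that no fold of type IA is applicable, which is exactly false in the situation of this lemma, and the paper's argument does not use it.
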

\begin{proof}
We first show that edge groups are as in Definition~\ref{DT}. From the  tameness of $\B$ we conclude that one of the following occurs:
\begin{enumerate}
\item ${B_f''}=1$.
\item ${B_f''}=\langle m_e\rangle$.
\item ${B_f''}=A_e$. This occurs if and only if $B_{f_i}=A_e$  for some $i\in\{1,2\}$. In this case $\omega(f_i)$ is full by tameness and therefore $z$  is also  full by  Lemma~\ref{LFull}.
\end{enumerate}
Since the remaining edge groups do not change we conclude that edge groups are as in Definition~\ref{DT}.

We can easily see that  $\B''$ is tame if $w\in V_0$. Indeed, by definition ${B_z''}=G(\K_w)$ or ${B_z''}= \langle B_x,B_y\rangle$ is generated by at most  $w({B_x})+w(B_y)<b(\K_w)$ meridians of $\K_w$. Hence condition (2) of Definition~\ref{DT} is fulfilled. 
 
We next show that $\B''$ is tame if $w\in V_1$. Note first that if $B_q=A_w$ for some $q\in \{x,y\}$, then obviously ${B_z''}={B_z'}=A_w$. Tameness of $\B$ implies that $q\in VB$ is full and so $z$ is full by Lemma~\ref{LFull}.  

Thus we may assume that both  $B_x$ and $B_y$ are freely generated by conjugates of $m_w$. It follows from Corollary~\ref{C1} that  ${B_z''}=\langle B_x,B_y\rangle$ is freely generated by at most $w(B_x)+w(B_y)$ conjugates of $m_w$.   Since any subgroup of $A_w$ that is generated by conjugates of $m_w$ is contained in  $\langle\langle m_w\rangle\rangle$,  which in turn is a proper subgroup of $A_w$,  we conclude that ${B_z''}=A_w$ if and only if $B_q=A_w$  for some $q\in \{x,y\}$. Therefore $\B''$ is tame.

Lastly,  suppose that $w\in V_2$.   If $B_x=1$ or $B_y=1$, say $B_x=1$, then clearly $B_z''=B_y$ and there is nothing to show. Thus we may assume that for $q\in \{x,y\}$ we have $B_q=F_q\times \langle m_w\rangle $ with 
$$ F_q=F(\{ g_{f'}l_{ [f'] }g_{f'}^{-1} \ | \ f'\in S_q \})$$
where $S_q\subseteq \{f'\in St_{+}(q,B) \ | \ B_{f'}=A_{[f']}\}$ and  $g_{f'}\in A_w$ for all $f'\in S_q$. Thus $B_z''=F_z''\times \langle m_w\rangle$ where 
$$F_z'':=\langle \{g_{f'}l_{ [f'] }g_{f'}^{-1}  \ | \  f'\in S_x\cup S_y \}\rangle \leq F_w. $$
From Corollary~\ref{C3} we conclude  that there exists  a subset 
 $S_z''\subseteq S_x\overset{.}{\cup} S_y $ 
such that 
$$F_z''=F(\{h_{f'}l_{ [f'] }h_{f'}^{-1} \ | \ f'\in S_z'' \})$$  
and   $h_{f'}l_{ [f'] }h_{f'}^{-1}$ is in $B_z$ conjugated to $g_{f'}l_{ [f'] }g_{f'}^{-1}$ for all $f'\in S_z''$.  From tameness we know that $(f')_{\alpha}l_{ [f'] }(f')_{\alpha}^{-1}$ is conjugated to $g_{f'}l_{ [f'] }g_{f'}^{-1}$ and so it is also conjugated to $h_{f'}l_{ [f'] }h_{f'}^{-1}$. 
 
We next show  that $S_z''$ is preserved by the fold, i.e. that $S_z''$  is mapped injectively into $St_{+}(z,B'')$. This is trivial if $f_1,f_2\in EB_{+}$ since  
$$St_{+}(z,B'')=St_{+}(x,B)\overset{.}{\cup} St_{+}(y,B).$$
If $f_1,f_2\in EB\setminus EB_{+}$, then it is clear that  
$$St_{+}(z,B'')=(St_{+}(x,B)\setminus\{{f_1}^{-1}\})\cup (St_{+}(y,B)\setminus \{{f_2}^{-1}\}) \cup \{f^{-1}\}. $$
As $F_z''$ is free on the set $\{h_{f'}l_{ [f'] }h_{f'}^{-1} \ | \ f'\in S_z''\}$  and $l(f_i^{-1})=(b^{-1},e^{-1},a^{-1})$ for $i=1,2$, we conclude that $S_z''$ contains at most one element of $\{f_1^{-1},f_2^{-1}\}$ which implies that $S_z''$ is mapped injectively into $St_{+}(z,B'')$.

Finally we need to show that $z$ is full if $B_z=A_w$. Observe that if  $F_z''=F_w$ then  $[S_z'']=St_{+}(w,A)$ as $F_w$ is freely generated by  $\{ l_{e} \hspace{0.5mm} |\hspace{0.5mm} e\in St_{+}(w,A)\}$. Thus there exists  
$$S_{+}''\subseteq S_{z}''\subseteq \{ f'\in St_{+}(z,B'') \ | \ B_{f'}=A_{[f']}\}$$ 
such that $[\cdot]:B''\rightarrow A$ maps $S_{+}''$  bijectively  onto $St_{+}(w,A)$.  From Lemma~\ref{LFull} we conclude that for all $f'\in S_{+}''$ the vertex  $\omega(f')$ is full in $\B''$ as it is full in $\B$  and hence $z$ is full in $\B''$ by  Lemma~\ref{FV}.
\end{proof}

\begin{lemma}
$c(\B'')<c(\B)$.
\end{lemma}

\begin{proof}

\textbf{Case i(a): } Assume that $f_1,f_2\in EB_{+}$ and $B_{f_1}=1$ or $B_{f_2}=1$.

If $x$ and $y$ are isolated in $\B$, then obviously   $z$   is isolated in $\B''$  and   we clearly have $w(B_z'')\leq w(B_z)+w(B_y)$.     Thus
$$c_1(\B'')=c_1(\B)-h(w)w(B_x)-h(w)w(B_y)+h(w)w(B_z)\leq c_1(\B).$$

If one of the vertices  $x$ or $y$, say $x$,  is not isolated in $\B$, then $z$ is not isolated in $\B''$. In this case we have $val_{+}^{1}(z,\B'')=val_{+}^{1}(x,\B)+val_{+}^{1}(y,\B)$ and consequently 
$$
c_1(\B'')=\left\{
\begin{array}{lcl}
c_1(\B)-h(w)w(B_y) & \text{if} & y \text{ is isolated in  } \B.  \\
c_1(\B)-h_{+}(w)  & \text{if}  & y \text{ isn't isolated in }   \B.
 \end{array}
\right.
$$
Since $w(B_y)\geq 0$ and $h_{+}(w)\geq  h(w)\geq 1$ we get  $c_1(\B'')\leq c_1(\B)$. 

As at least  one of the edges involved on the fold has trivial group we obtain 
$$c_2(\B'')=c_2(\B)-4.$$ 
Therefore $c(\B'')<c(\B)$.
 
\textbf{Case i(b): }  Assume that $f_1,f_2\in EB_{+}$ and  $B_{f_1}\neq 1 \neq B_{f_2}$. Thus  
$$val_{+}^{1}(u,\B'')=val_{+}^{1}(u,\B )-1.$$ 

If both $x$ and $y$ are isolated in $\B$, then clearly $z$ is isolated in $\B''$.  Moreover,  $w(B_z'')\leq w(B_x)+w(B_y)-1$. Indeed, for $w\in V_0$ the inequality  follows from the fact that  $\K_w$ is meridionally tame and  $B_{f_i}\neq 1$ for $i\in \{1,2\}$.  For $w\in V_1\cup V_2$ we know from the tameness of $\B$ that $B_q$ is generated by conjugates of $m_w$ since $q$ is isolated.  Thus  $B_{f_i}=\langle m_e\rangle$ since otherwise the vertex $\omega(f_i)$ is full.  Hence for $w\in V_1$ the inequality follows from Corollary~\ref{C1}(2.b) and for $w\in V_2$ this is trivial since   $B_x=B_y=\langle m_w\rangle$ and hence $B_z''=\langle m_w\rangle$. As $h_{+}(v)=h(w)$ we obtain
\begin{eqnarray}
c_1(\B'')&   =   & c_1(\B)+h(w)(w(B_z)-w(B_x)-w(B_y))+\nonumber\\
         &       & + h_{+}(v)(val_{+}^1(u,\B )-val_{+}^{1}(u,\B'')) \nonumber\\
         & \leq  & c_1(\B)-h(w)+h_{+}(v)\nonumber\\
         &   =   & c_1(\B).\nonumber 
\end{eqnarray}

Now suppose that $x$ is not isolated in $\B$, the case that   $y$ is not isolated in $\B$ is analogous. Hence $z$ is not isolated in $\B'' $. Note that 
$$val_{+}^{1}(z,\B'')=val_{+}^{1}(x,\B)+val_{+}^{1}(y,\B)$$ 
and  $h_{+}(v)=h(w)\leq h_{+}(w)$. Since $B_{f_2}$ is non-trivial it follows that $w(B_y)\geq 1$. Thus  if $y$ is isolated in $\B$ we obtain
$$ c_1(\B'')= c_1(\B)-h(w)w(B_y)+h_{+}(v)=c_1(\B)+h(w)(1-w(B_y))\leq c_1(\B).$$
If $y$ is not isolated in $\B$ then 
$$c_1(\B'')=c_1(\B)-h_{+}(w)+h_{+}(v) \leq c_1(\B).$$

Since  $c_2(\B'')\leq c_2(\B)-2$ we conclude that the $c$-complexity decreases.

\textbf{Case ii: } Suppose that $f_1, f_2\in EB\setminus EB_{+}$.

Initially  observe that  if $x$ and $y$ are isolated in $\B$ then   $z$ is isolated in $\B''$ and we clearly have $w(B_z'')\leq w(B_x)+w(B_y)$.  Hence
$$c_1(\B'')=c_1(\B)-h(w)(w(B_x)+w(B_y)-w(B_z))\leq c_1(\B).$$

If $x$ or $y$, say $x$, is not isolated  in $\B$ then $z$ is not isolated in $\B''$. Furthermore, we have
$$
val_{+}^{1}(z,\B'')=\left\{
\begin{array}{lcl}
val_{+}^{1}(x,\B)+val_{+}^{1}(y,\B) & \text{if} & B_{f_1}=1 \text{ or } B_{f_2}=1.  \\
val_{+}^{1}(x,\B)+val_{+}^{1}(y,\B)-1 & \text{if}  & B_{f_1}\neq 1 \text{ and } B_{f_2}\neq 1.\\
\end{array}
\right.
$$
If $y$ is isolated in $\B$, then a simple calculation shows that 
$$c_1(\B'')= c_1(\B)-h(w)w(B_y).$$
If $y$ is not isolated in $\B$ then we see that 
$$
c_1(\B'')=\left\{
\begin{array}{lcl} 
c_1(\B)-h_{+}(w) & \text{if}  &   B_{f_1}=1 \text{ or } B_{f_2}=1.\\
c_1(\B) & \text{ if} &  B_{f_1}\neq 1 \text{ and } B_{f_2}\neq 1.
 \end{array}
\right.
$$
Since $h_{+}(w)\geq h(w)\geq 1$ and $w(B_y)\geq 0$ it follows that $c_1(\B'')\leq c_1(\B)$. 
In any case the $c_2$-complexity decreases by at least two so that the $c$-complexity decreases. 
\end{proof}

%-------------------folds of type IIA----------------------------------------

\textbf{Case 2: No fold of type IA can be applied to $\B$ but an elementary move of type IIA can be applied to $\B$:}  Let $\B'$ be the $\mathbb A$-graph obtained from $\B$ by this elementary move. Hence  there is an edge $f\in EB $ with label $(a,e,b)$,  initial vertex  $x=\alpha(f)$ labeled   $(B_x,v)$ and terminal vertex  $y=\omega(f)$ labeled $(B_y,w)$ such that 
$$B_f\neq \alpha_{e}^{-1}(a^{-1}B_xa) .$$

We will distinguish two cases depending on whether $f\in EB_{+}$ or $f\in EB\setminus EB_{+}$.   

\textbf{Case i: } Assume that  $f\in EB_{+}$. 

It follows from Lemma~\ref{L8} that $B_f=1$ and $ \alpha_e^{-1}(a^{-1}B_xa)=\langle m_e\rangle$.  Denote by $\B'$ the $\A$-graph obtained from $\B$ by this fold, i.e. we replace   $B_f=1$ by $B_f':=\langle m_e\rangle$ and $B_y$ by ${B_y'}:= \langle B_y,b^{-1}\omega_e(m_e)b\rangle =\langle B_y,b^{-1}m_wb\rangle$. From Proposition~8 of \cite{RW}  it follows that $\B'$ is $\pi_1$-surjective. 

\begin{figure}[h!] 
\begin{center}
\includegraphics[scale=1]{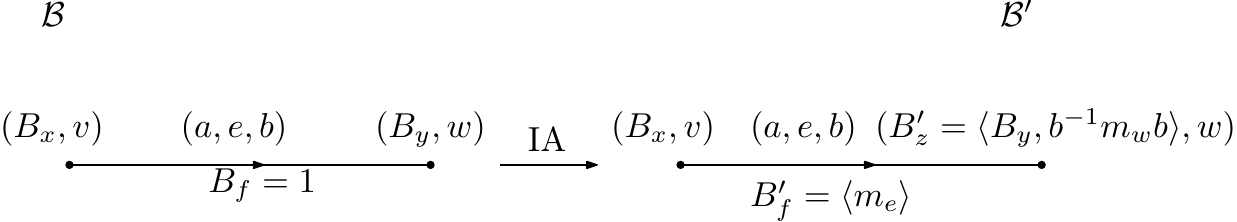}
\end{center}
\caption{An elementary  move of type IIA along an oriented edge.}\label{Fig4}
\end{figure}

Let now $\B''$ be the $\A$-graph obtained from $\B'$ by replacing the vertex group $B_y'$ by the group $B_y''$ defined as follows:
$$
{B_y''}:=\left\{
\begin{array}{lcl}
{B_y'}  & \text{if} & w\in V_1\cup V_2 \text{ or } w\in V_0 \text{ and } w({B }_y)+1<b(\K_w). \\
G(\K_w) & \text{if}  & w\in V_0 \text{ and }  w({B }_y) +1\geq b(\K_w). \\
 \end{array}
\right.
$$

Observe that the underlying graphs of $\B'$ and $\B''$ are both equal to the underlying graph $B$ of $\B$.  Moreover, $B''_u=B_u$ for all $u\in VB\setminus \{y\}\cup EB\setminus\{ f,f^{-1}\}$ while $B''_f=B'_f=\langle m_e\rangle$. 
By the same reasons as before we see that  $\B''$ is $\pi_1$-surjective. 
 
\begin{lemma}
The $\A$-graph $\B''$ is tame. 
\end{lemma}
\begin{proof}
Edge groups are as in Definition~\ref{DT} as ${B_f''}=\langle m_e\rangle$ and  the remaining edge groups do not change.

  It is trivial to see that the vertex group  $B_x$ is as in Definition~\ref{DT} since $B_x''=B_x$ and $B''_f=\langle m_e\rangle$.

It remains to check that the tameness condition is satisfied for $B''_y$. If $[y]=w\in V_0$ then this is immediate as ${B_y''}=G(\K_w)$ or ${B_y''}$ is generated by at most $w(B_y)+1<b(\K_w)$ meridians. 

We now show that $\B''$ is tame if  $w\in V_1$. Note first that  $B_y=A_w$ clearly implies  ${B_y''}= \langle B_y,b^{-1}m_wb\rangle =A_w$. From tameness it follows that $y$ is full and consequently  $y$ is full in $\B''$ by Lemma~\ref{LFull}.  Assume now  that $B_y$ is freely generated by $w(B_y)$ conjugates of $m_w$. It follows from Corollary~\ref{C1} that ${B_y''}=\langle B_y,b^{-1}m_wb\rangle$ is freely generated by at most $w(B_y)+1$ conjugates of $m_w$ and so is a proper subgroup of $A_w$. Thus we have showed that  $\B''$ is tame if $w\in V_1$. 

 Finally we show that $\B''$ is tame if $w\in V_2$. This case is trivial since $B_{f'}''=B_{f'}$ for all $f'\in St_{+}(y,B)$ and
 $$
{B_y''}:=\left\{
\begin{array}{lcl}
\langle m_w\rangle & \text{if} & B_y=1. \\
B_y & \text{if}  &  B_y\neq 1. \\
 \end{array}
\right.
$$
 %$B_y''=\langle m_w\rangle $ if $B_y=1$ or $B_y''=B_y$ if $B_y\neq 1$. 
\end{proof}

\begin{lemma}
$c(\B'')<c(\B)$. 
\end{lemma}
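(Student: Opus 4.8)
The plan is to prove $c(\B'')<c(\B)$ by showing separately that $c_2(\B'')<c_2(\B)$ and that $c_1(\B'')\le c_1(\B)$; by the lexicographic order this is exactly what is needed. The $c_2$ part is immediate and I would do it first: the fold does not alter the underlying graph, so $|EB''|=|EB|$ and $|E_2B''|=|E_2B|$, and the only edge group that changes is $B_f$, which passes from the trivial group to $B_f''=\langle m_e\rangle\cong\mathbb Z$. Since $EB$ contains both orientations of each edge and $B_{f^{-1}}=B_f$, this moves exactly two oriented edges into $E_1$, so $|E_1B''|=|E_1B|+2$ and hence $c_2(\B'')=c_2(\B)-1$. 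It therefore remains only to establish $c_1(\B'')\le c_1(\B)$.

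For the $c_1$ part I would isolate the two vertices whose data change, namely $x:=\alpha(f)$ and $y:=\omega(f)$, which are distinct because $B$ is a tree, and write $c_1(\B'')-c_1(\B)=\Delta_x+\Delta_y$. Put $v:=[x]=\alpha(e)$ and $w:=[y]=\omega(e)$, and recall the identity $h_{+}(v)=h(w)$. At $y$ only the vertex group changes, from $B_y$ to $B_y''=\langle B_y,b^{-1}m_wb\rangle$ (or $G(\K_w)$); since $f^{-1}\notin EB_{+}$ we have $f^{-1}\notin St_{+}(y,B)$, so neither the isolation status nor the positive valence of $y$ is affected. Consequently $\Delta_y=0$ when $y$ is not isolated, and $\Delta_y=h(w)\big(w(B_y'')-w(B_y)\big)\le h(w)$ when $y$ is isolated, because $B_y''$ arises from $B_y$ by adjoining a single meridian and thus $w(B_y'')\le w(B_y)+1$ in all cases (including the enlargement to $G(\K_w)$, where $w(B_y'')=b(\K_w)\le w(B_y)+1$).

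The heart of the argument is the computation of $\Delta_x$, which I expect to be the main obstacle. Here $B_x$ is unchanged, but $f$ becomes nontrivial, so $x$ is non-isolated in $\B''$. If $x$ is already non-isolated in $\B$, then $val_{+}^{1}(x,\B'')=val_{+}^{1}(x,\B)+1$, and substituting into the non-isolated term gives directly $\Delta_x=-h_{+}(v)=-h(w)$, with no need to know $B_x$. If instead $x$ is isolated in $\B$, then $val_{+}^{1}(x,\B)=0$, so $x$ contributes $h(v)\,w(B_x)$ before the fold and $-h_{+}(v)(1-1)=0$ after, whence $\Delta_x=-h(v)\,w(B_x)$; it then remains to evaluate $w(B_x)$. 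By Lemma~\ref{L8} we have $\alpha_e^{-1}(a^{-1}B_xa)=\langle m_e\rangle\neq 1$, so $B_x\neq 1$. When $v\in V_1$, the proof of Lemma~\ref{L8} (using Corollary~\ref{C1} and the non-applicability of type IA folds via Lemma~\ref{L7}) forces $B_x=\langle\langle m_v\rangle\rangle$, so $w(B_x)=n_v$ and $\Delta_x=-h(v)\,n_v=-h_{+}(v)=-h(w)$. When $v\in V_2$, isolation of $x$ forces $S_x=\emptyset$ in the tameness description, hence $F_x=1$ and $B_x=\langle m_v\rangle$, so $w(B_x)=1$ and, using $n(v)=1$, again $\Delta_x=-h(v)=-h_{+}(v)=-h(w)$. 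Thus $\Delta_x=-h(w)$ in every case.

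Combining the two contributions yields $\Delta_x+\Delta_y\le -h(w)+h(w)=0$, so $c_1(\B'')\le c_1(\B)$, and together with $c_2(\B'')=c_2(\B)-1$ this gives $c(\B'')<c(\B)$ as required. The delicate points are the exact identification of $B_x$ in the isolated case, where the standing hypotheses that $\B$ is tame and that no fold of type IA applies are both essential, and the careful bookkeeping showing that the positive valence of $y$ is untouched precisely because $f^{-1}\notin EB_{+}$; once these are in place, the clean cancellation $\Delta_x+\Delta_y\le 0$ forced by $h_{+}(v)=h(w)$ does the rest.
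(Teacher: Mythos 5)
Your proof is correct and follows essentially the same route as the paper's: the same identification of $B_x$ as $\langle\langle m_v\rangle\rangle$ (via Lemma~\ref{L8} and Lemma~\ref{L7}) or $\langle m_v\rangle$ in the isolated case, the same use of $h_+(v)=h(w)$ and of $w(B_y'')\le w(B_y)+1$, and the same $c_2$ drop by $1$. Your reorganization as $\Delta_x+\Delta_y$ with the uniform claim $\Delta_x=-h(w)$ is a slightly cleaner bookkeeping of the identical argument.
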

\begin{proof}
In order to compute the complexity recall  that   $B_f=1$ is replaced by  ${B_f''}=\langle m_e\rangle $ which implies that $val_{+}^{1}(x,\B'')=val_{+}^{1}(x,\B)+1$. 

If $y$ is not isolated in $\B$ , then  $y$ is not isolated in $\B''$. A straightforward calculation shows that
$$
c_1(\B'')=\left\{
\begin{array}{lcl}
c_1(\B)-h(v)w(B_x)  & \text{if} &   x \text{ is isolated in  } \B.  \\
c_1(\B)-h_{+}(v)  & \text{if}  & x \text{ isn't isolated in }   \B.
 \end{array}
\right.
$$
As $w(B_x)\geq 1$ and $h_{+}(v)\geq h(v)\geq 1$ we obtain  $c_1(\B'')<c_1(\B)$. 

Assume now that   $y$ is isolated in $\B$ which implies that $y$ is  isolated in $\B''$. For the case in which $v\in V_1$ it was showed in  the   proof of Lemma~\ref{L8}  that $B_x=\langle\langle m_v\rangle\rangle$. Since no fold of type IA is applicable to $\B$, Lemma~\ref{L7} implies that $St_{+}(x,B)=\{f\}$. Hence the vertex $x$ is isolated in $\B$ and   its contribution to the $c_1$-complexity is $h([x])w(B_x)=h(v)n_{v}=h_+(v)=h(w)$. It is now  easy to see that
\begin{eqnarray}
c_1(\B'') & = & c_1(\B)-h(v)n_v-h(w)w(B_y)+h(w)w({B_y''})\nonumber\\
          & = & c_1(\B)-h(w)(1+w(B_y)+w(B''_y))\nonumber\\
          & \le & c_1(\B)\nonumber
\end{eqnarray}
as $w({B_y''})=w(\langle B_y,b^{-1}m_wb\rangle)\leq w(B_y)+1$.

Let us now consider that case in which $v\in V_2$.    If $x$ is isolated in $\B$, then its contribution to the $c_1$-complexity of $\B$ is $h(v)$ since in this case the tameness of $\B$ tells us that  $B_x=\langle m_v\rangle$. A simple calculation shows  that 
$$  c_1(\B'')=c_1(\B)-h(v)-h(w)w(B_y)+h(w)w(\langle B_y,b^{-1}m_wb\rangle). $$
Since $h(w)=h(v)$ we conclude that  $c_1(\B'')\leq c_1(\B)$. 

 If $x$ is not isolated in $\B$, then    $c_1(\B'')\leq c_1(\B)$ as $h_{+}(v)=h(w)$.

Finally, as $c_2(\B'')=c_2(\B)-1$it implies that $c(\B'')<c(\B)$.  
\end{proof}

\textbf{Case ii:} Assume now that $f\in EB\setminus EB_{+}$. Note that in this case  $w\notin V_0$.
The fact that the fold is possible,  combined with Corollaries \ref{C1} and \ref{C3},  imply that one of the following occurs:
\begin{enumerate}
\item $\alpha_e^{-1}(a^{-1}B_xa)=\langle m_{e}\rangle$.
\item $\alpha_e^{-1}(a^{-1}B_xa)=A_e=\langle m_e,l_e\rangle$. This occurs iff $B_x=A_v$.
\end{enumerate}

Denote by $\B'$ the $\A$-graph obtained from $\B$ by this fold, i.e. we replace $B_f$ by ${B_f'}$ defined as follows:
$$
{B_f'}:=\left\{
\begin{array}{lcl}
\langle m_e\rangle  & \text{if} & \alpha_e^{-1}(a^{-1}B_xa)=\langle m_e\rangle. \\
A_e & \text{if}  & \alpha_e^{-1}(a^{-1}B_xa)=A_e. \\
 \end{array}
\right.
$$
and the vertex group $B_y$ by ${B_y'}$ defined as 
$$
{B_y'}:=\left\{
\begin{array}{lcl}
 \langle B_y,b^{-1} \omega_e(m_e) b\rangle  & \text{if} & \alpha_e^{-1}(a^{-1}B_xa)=\langle m_e\rangle. \\
 \langle B_y,b^{-1}\omega_e(m_e)b,b^{-1}\omega_e(l_e)b\rangle & \text{if}  & \alpha_e^{-1}(a^{-1}B_xa)=A_e. \\
 \end{array}
\right.
$$
Once again, it follows from Proposition~8 of \cite{RW}  that   $\B'$ is $\pi_1$-surjective. 

Let $\B''$ be the $\A$-graph obtained from $\B'$ by replacing the vertex group ${B_y'}$ by ${B_y''}$ defined as follows:
$$
{B_y''}:=\left\{
\begin{array}{lcl}
{B_y'}  & \text{if} & w\in V_2. \\
\langle\langle m_w\rangle\rangle & \text{if}  & \alpha_e^{-1}(a^{-1}B_xa)=\langle m_e\rangle \text{ and } w\in V_1. \\
A_w & \text{if}  & \alpha_e^{-1}(a^{-1}B_xa)=A_e \text{ and } w\in V_1. \\
 \end{array}
\right.
$$
We remark that $\B''$ is $\pi_1$-surjective as it is defined from the $\pi_1$-surjective $\mathbb A$-graph $\mathcal B'$ by possibly enlarging the vertex group at $y$. 

\begin{lemma}
The $\A$-graph $\B''$ is tame. 
\end{lemma}
\begin{proof} Note that  ${B_f''}={B_f'}=\langle m_e\rangle$ or ${B_f''}={B_f'}=A_e=\langle m_e,l_e\rangle$. The latter case occurs only  when $B_x=A_{v}$. By tameness it follows that $x$ is full in $\B$ and hence  $x$ is full in $\B''$ by  Lemma~\ref{LFull}.

The $\A$-graph $\B''$ is obviously tame if $w\in V_1$ since by definition   ${B_y''}=\langle\langle m_w\rangle\rangle $ or $ {B_y''}=A_w$. The latter case occurs only  when $B_x=A_v$ and so ${B_f''}=A_e$. As $f^{-1}\in St_{+}(y,B)$ and $St_{+}(w,A)=\{e^{-1}\}$ we conclude from Lemma~\ref{FV} that $y$ is full in $\B''$.  

Finally we show that $\B''$ is tame if $w\in V_2$. It is trivial to see that $\B''$ is tame if  $\alpha_e^{-1}(a^{-1}B_xa)=\langle m_e\rangle$ since in this situation  we have 
$$
{B_y''}:=\left\{
\begin{array}{lcl}
\langle m_w\rangle & \text{if} & B_y=1. \\
B_y & \text{if}  & B_y\neq 1. \\
 \end{array}
\right.
$$
It is also trivial to see that $\B''$ is tame if $\alpha_e^{-1}(a^{-1}B_xa)=A_e$ and $B_y\leq \langle m_w\rangle$ since in this case ${B_y''}=\langle b^{-1}l_{e^{-1}}b\rangle\times \langle m_w\rangle \varsubsetneq A_w$. So 
assume that $\alpha_e^{-1}(a^{-1}B_xa)=A_e$ and $B_y=F_y\times \langle m_w\rangle$ where $F_y$ is freely generated by 
$$\{g_{f'}l_{ [f'] }g_{f'}^{-1} \ | \ f'\in S_y\}.$$
Thus  
$${B_y''}={F_y''}\times \langle m_w\rangle$$
where
$${F_y''}:=\langle \{b^{-1}l_{e^{-1}}b\}\cup \{g_{f'}l_{ [f'] }g_{f'}^{-1} \ | \ f'\in S_y\}\rangle.$$
Corollary~\ref{C3} implies that there exists a subset  $ {S_y''}\subseteq S_y\cup \{f^{-1}\}$ such that ${F_y''}$  is freely generated by  $\{h_{f'}l_{ [f'] }h_{f'}^{-1} \ | \  f'\in {S_y''}\}$ and $h_{f'}l_{[f']}h_{f'}^{-1}$ is in ${B_y''}$ is conjugated to $g_{f'}l_{[f']}g_{f'}^{-1}$  or to $b^{-1}l_{e^{-1}}b$. If ${B_y''}=A_w$ then ${F_y''}=F_w$ which implies that $[{S_y''}]=St_{+}(w,A)$. Combining Lemmas~\ref{FV} and \ref{LFull} and the tameness of $\B$  we conclude that $y$ is full in $\B''$. Therefore $\B''$ is tame if $w\in V_2$.  
\end{proof}

\begin{lemma}
$c(\B'')<c(\B)$.
\end{lemma}
\begin{proof}
Note that if $y$ is isolated in $\B$, then  we obtain 
$$c_1(\B'')=c_1(\B)-h(w)w(B_y)\leq c_1(\B) $$
since $y$ is not isolated in $\B''$ and $val_{+}^{1}(y,\B'')=1$. 

If $y$ is not isolated in $\B$,  then $val_{+}^{1}(y,\B'')\geq val_{+}^{1}(y,\B) $. Thus   
$$c_1(\B'')=c_1(\B)+h_{+}(w)(val_{+}^{1}(y,\B)-1)-h_{+}(w)(val_{+}^{1}(y,\B'')-1)\leq c_1(\B).$$

In both cases a simple calculation shows that $c_2(\B'')\leq c_2(\B)-1$ which implies that the $c$-complexity decreases. 
\end{proof}

%-------------------------------

\section{Meridional tameness of torus knots}

 In this section we show that torus knots are meridionally tame. This fact is implicit in \cite{RZ}.   Note that this   implies Corollary~\ref{CJSJ} since  any knot whose exterior is a graph manifold is equal to   $\K(\mathcal{A})$  for some labeled tree $\mathcal{A}$, where  $\K_v$ a torus knot for any $v\in V_0$. 

\begin{lemma}\label{lem:torus-knots} Torus knots are meridionally tame.
\end{lemma}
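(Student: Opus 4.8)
The plan is to exploit the Seifert structure of the torus knot exterior together with the structural results of Rost and Zieschang \cite{RZ} on meridional generating systems. Fix the $(p,q)$-torus knot $\K$ with $2\le p<q$, so that $b(\K)=p$ by Schubert. Write $G(\K)=\langle a,b\mid a^p=b^q\rangle$, let $Z=\langle h\rangle$ be its center with $h=a^p=b^q$, and let $\pi\colon G(\K)\to\bar G:=G(\K)/Z\cong\mathbb Z/p\ast\mathbb Z/q$ be the quotient onto the orbifold group of the base $D^2(p,q)$. The fibre lies on $\partial X(\K)$, so $Z\le P(\K)$; writing $P(\K)=\langle m,\lambda\rangle\cong\mathbb Z^2$ with $\lambda=hm^{-pq}$ one gets $\bar\lambda=\bar m^{-pq}$ and hence $\pi(P(\K))=\langle\bar m\rangle$, where $\bar m$ is a primitive element representing the boundary of $D^2(p,q)$ (e.g.\ $\bar m=\bar a\bar b$ up to the usual identification). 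In particular $\bar m$ is hyperbolic of translation length $2$ for the action of $\bar G$ on its Bass--Serre tree, $\langle\bar m\rangle$ is malnormal and self-normalizing in $\bar G$ (the setwise stabilizer of a line for an action with trivial edge stabilizers), and $\pi^{-1}(\langle\bar m\rangle)=P(\K)$ since $Z\le P(\K)$.

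Now let $U=\langle\gamma_1,\dots,\gamma_l\rangle$ with $\gamma_i=g_img_i^{-1}$ and $l<p$, and set $\bar U:=\pi(U)=\langle\bar\gamma_1,\dots,\bar\gamma_l\rangle$. The key input I would extract from \cite{RZ} is the Nielsen reduction of meridional systems in $\bar G$: because $l<p=\min(p,q)$, the elements $\bar\gamma_1,\dots,\bar\gamma_l$ freely generate $\bar U$, and this meridian basis is \emph{peripheral-structure-preserving}, i.e.\ every element of $\bar U$ that is $\bar G$-conjugate into $\langle\bar m\rangle$ is already $\bar U$-conjugate to a power of some $\bar\gamma_i$. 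Conceptually the threshold $p$ is forced by the tree: producing either torsion or a relation among conjugates of $\bar m$ requires encircling a vertex of cone order $p$, which fewer than $p$ meridians cannot achieve. Once $\bar U$ is free of rank $l$ on the $\bar\gamma_i$, the surjection $F_l\twoheadrightarrow U$, $e_i\mapsto\gamma_i$, composed with $\pi$ is an isomorphism onto $\bar U$; hence $\pi|_U$ is injective, $U\cap Z=1$, and $U$ is free of rank $l$ on the honest meridians $\gamma_i$.

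It then remains to verify tameness directly. Fix $g\in G(\K)$. If $gP(\K)g^{-1}\cap U=1$ we are in case (1). Otherwise choose $1\ne x\in gP(\K)g^{-1}\cap U$; since $U\cap Z=1$ we have $\bar x\ne1$, so $\bar g\langle\bar m\rangle\bar g^{-1}\cap\bar U\ne1$. As $\bar g\langle\bar m\rangle\bar g^{-1}$ is maximal cyclic and the structural result forces the generator of this intersection to be $\bar U$-conjugate to a power of some $\bar\gamma_i$, primitivity upgrades this to $\bar g\langle\bar m\rangle\bar g^{-1}=\bar u\langle\bar\gamma_i\rangle\bar u^{-1}$ for some $\bar u\in\bar U$. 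Lifting $\bar u$ to $u\in U$, the element $u\gamma_iu^{-1}\in U$ is a meridian lying in $\pi^{-1}(\bar g\langle\bar m\rangle\bar g^{-1})=gP(\K)g^{-1}$; since the only conjugate of $m$ contained in $gP(\K)g^{-1}$ is $gmg^{-1}$ (again because $\langle\bar m\rangle$ is self-normalizing and $P(\K)$ is abelian), we get $u\gamma_iu^{-1}=(gmg^{-1})^{\pm1}$, so that $gmg^{-1}\in U$ and $gP(\K)g^{-1}\cap U=g\langle m\rangle g^{-1}$. Finally, replacing $\gamma_i$ by its $U$-conjugate $u\gamma_iu^{-1}=(gmg^{-1})^{\pm1}$ is a Nielsen transformation through meridians, producing a free meridian basis $\{gmg^{-1},m'_2,\dots,m'_l\}$ of $U$; this is exactly case (2). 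As $U$ was an arbitrary meridional subgroup generated by fewer than $b(\K)$ meridians, $\K$ is meridionally tame.

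The main obstacle is the structural statement in the second paragraph: the freeness of $\bar U$ at full rank $l$ and the peripheral-structure-preservation of the meridian basis when $l<p$. This is precisely where the bridge-number threshold enters and where the Nielsen-theoretic analysis of \cite{RZ} does the real work; by contrast the passage between $G(\K)$ and $\bar G$ and the final tameness bookkeeping only use the central extension $1\to Z\to G(\K)\to\bar G\to1$ and standard malnormality properties of hyperbolic cyclic subgroups in the free product $\mathbb Z/p\ast\mathbb Z/q$.
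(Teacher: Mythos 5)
Your overall strategy coincides with the paper's: kill the center, invoke \cite{RZ} to get a free meridian basis of $U$, and reduce tameness to the statement that every peripheral element of $U$ is $U$-conjugate into one of the basis meridians (plus the premalnormality of $\langle m\rangle$ with respect to $P(\K)$ for the final bookkeeping, which you derive correctly from malnormality of $\langle\bar m\rangle$ in $\mathbb Z/p\ast\mathbb Z/q$). The problem is that your whole proof hinges on the ``peripheral-structure-preservation'' claim in your second paragraph, and you do not prove it: you attribute it to \cite{RZ} and offer only a heuristic (``encircling a vertex of cone order $p$''). What Theorem~1.4 of \cite{RZ} actually supplies is that $U$ is freely generated by some $r\le l$ meridians $m'_1,\dots,m'_r$, not necessarily the original $\gamma_i$ at full rank $l$ (your claim that the $\bar\gamma_i$ themselves freely generate $\bar U$ already fails if two of them coincide). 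The statement that every peripheral element of $U$ is $U$-conjugate into some $\langle m'_i\rangle$ is, as the paper itself points out, only \emph{implicit} in \cite{RZ}; it is exactly the step where the threshold $l<\min(p,q)$ must enter, and asserting it is not a proof. As stated, your argument is therefore incomplete at its crux.

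The paper closes this gap with a short covering-space argument. View $\bar G=\mathbb Z/p\ast\mathbb Z/q$ as $\pi_1$ of the orbifold $\mathcal O=S^2(\infty,p,q)$, so that $\bar m$ is parabolic, and let $\tilde{\mathcal O}$ be the cover corresponding to $\pi(U)$, which is free on the $r$ parabolics $\pi(m'_i)$. Then $\tilde{\mathcal O}$ is an $(r+1)$-punctured sphere with at least $r$ compact boundary components carrying the classes $\langle\pi(m'_i)\rangle$. If the remaining end is not parabolic, every parabolic conjugacy class of $\pi(U)$ is represented by one of those $r$ ends and the desired statement follows. If it were parabolic, $\tilde{\mathcal O}\to\mathcal O$ would be finite-sheeted; since $\pi(U)$ is torsion free and $\gcd(p,q)=1$ the index is at least $pq$, so $\chi(\tilde{\mathcal O})\le pq\left(-1+\tfrac1p+\tfrac1q\right)=-pq+p+q\le 1-\min(p,q)$, contradicting $\chi(\tilde{\mathcal O})=1-r>1-\min(p,q)$. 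You need an argument of this kind (or a genuine Nielsen-reduction argument extracted in detail from \cite{RZ}) to justify the structural claim; everything else in your write-up is correct bookkeeping.
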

\begin{proof} Let $\K$ be a $(p,q)$-torus knot. Without loss of generality we may  assume that $  p>q\geq 2$. Suppose that   $k<b(\K)=min(p,q)=q$   and  $U=\langle  m_1,\ldots,m_k \rangle$ is a  meridional subgroup of $\K$.

It follows from Theorem~1.4 of \cite{RZ} that there exist $r\leq k$   meridians $m'_1,\ldots,m'_r\in G(\K)$   such    that  $U$ is freely generated by $\{m'_1,\ldots,m'_r\}$.   

We will show that for any $g\in G(\K)$ one of the following holds:
\begin{enumerate}
\item $gP(\K)g^{-1}\cap U=\{1\}$.
\item $gP(\K)g^{-1}\cap U=g\langle m\rangle g^{-1}$ and $gmg^{-1}$ is in $U$ conjugate to $m'_j$ for some $j\in \{1,\ldots,r\}$.
\end{enumerate}
Note that this claim  is implicit in the argument in \cite{RZ}, however, in order to avoid getting involved with the combinatorial details, we give an alternative argument. 

Observe that it suffices to show that any conjugate of a peripheral element that lies in $U$ is in $U$ conjugate to some element of $\langle m'_i\rangle$ for some $i\in \{1,\ldots,r\}$ since  the meridional subgroup $\langle m\rangle $ is premalnormal in $G(\K)$ with respect to the peripheral subgroup $P(\K)$, that is, $g\langle m\rangle g^{-1}\cap P(\K)\neq \{1\}$ implies $g\in P(\K)$, see  Lemma~3.1 of \cite{RW2}.
 
Let $\pi:G(\K) \to \mathbb Z_p*\mathbb Z_q$ be the map that quotients out the center. Note that we can think of $\mathbb Z_p*\mathbb Z_q$ as the fundamental group of the hyperbolic 2-orbifold $\mathcal O=S^2(\infty,p,q)$ of finite volume. Thus the boundary corresponds to a parabolic element. As $U$ is free and has therefore trivial center it follows that $\pi|_U$ is injective. Thus $\pi(U)$ is free in the parabolic elements $\pi(m'_1),\ldots ,\pi(m'_r)$.

Now consider the covering $\tilde{\mathcal O}$ of $\mathcal O$ corresponding to $\pi(U)$. As $\pi_1(\tilde{\mathcal O})$ is freely generated by $r$ parabolic elements it follows that $\tilde{\mathcal O}$ is an ($r+1$)-punctured sphere with at least $r$ parabolic boundary components. If the last boundary component corresponds to a hyperbolic element then the claim of the lemma holds. Thus we may assume that all boundary components correspond to parabolic elements, i.e., that $\tilde{\mathcal O}$ is a finite sheeted cover of $\mathcal O$ with \begin{equation}\label{equ1}\chi(\tilde{\mathcal O})=2-(r+1)=1-r> 1-\min(p,q)=1-q.\tag{$\ast$}\end{equation} 
Note that $\chi(\mathcal O)=1-(1-\frac{1}{p})-(1-\frac{1}{q})=-1+\frac{1}{p}+\frac{1}{q}$. As $\pi(U)$ is a torsion free subgroup of $\mathbb Z_p*\mathbb Z_q$ and as $p$ and $q$ are coprime it follows that $|\mathbb Z_p*\mathbb Z_q:\pi(U)|\ge p\cdot q$. Thus, as $\chi(\mathcal{O})<0$ we obtain
$$\chi(\tilde{\mathcal O})\le p\cdot q\cdot \chi(O)=p\cdot q\cdot \left(-1+\frac{1}{p}+\frac{1}{q}\right)=-p\cdot q+p+q.$$ 
As $2\leq q<p$ and therefore $p\ge 3$ it follows that 
$$\chi(\tilde{\mathcal O})\le -p\cdot q+p+q=p(1-q)+q\le 3(1-q)+q=3-2q\le1-q=1-\min(p,q),$$ 
a contradiction to (\ref{equ1}). Thus this case cannot occur. 
\end{proof}

\section{Meridional tameness of 3-bridge knots}\label{sec_3bridge}

\begin{proposition}\label{3bridge} A prime 3-bridge knot is meridionally tame.
\end{proposition}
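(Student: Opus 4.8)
The plan is to mirror the proof of Lemma~\ref{lem:torus-knots}. Since $b(\K)=3$, meridional tameness only concerns meridional subgroups $U\le G(\K)$ generated by one or two meridians, and I must show each such $U$ is tame. The central reduction is again premalnormality of the meridian in $G(\K)$ relative to $P(\K)$: once $\langle m\rangle$ is premalnormal, the argument used for torus knots (following \cite{RW2}) reduces the entire tameness statement to the single claim that every peripheral element of $U$ is $U$-conjugate into a cyclic subgroup generated by one of the meridional generators of $U$. That $U$ is in fact freely generated by $r\le 2$ meridians will come out of the analysis below. So the whole proof reduces to establishing premalnormality together with this peripheral-generation claim.

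First I would remove the non-hyperbolic knots. By the strong form of Schubert's theorem (Theorem~\ref{Sc}, \cite{Schu}) the bridge number of a satellite knot is at least twice the bridge number of its companion; as a companion is a nontrivial knot and hence has bridge number $\ge 2$, every satellite has bridge number $\ge 4$. Thus a prime $3$-bridge knot is never a satellite, and by geometrization it is either a torus knot or hyperbolic. Torus knots are settled by Lemma~\ref{lem:torus-knots}, so I may assume $\K$ is hyperbolic. Then $G(\K)$ is a torsion-free geometrically finite Kleinian group with a single cusp, $P(\K)$ is malnormal in $G(\K)$, and since $\langle m\rangle\le P(\K)$ this malnormality yields precisely the premalnormality of $\langle m\rangle$ needed above. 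Moreover every meridian is parabolic.

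Now the structure of $U$. If $U$ is cyclic (in particular if it is generated by a single meridian) tameness is immediate from premalnormality, exactly as in the cyclic case of Lemma~\ref{lem:torus-knots}. If $U=\langle m_1,m_2\rangle$ is elementary, then $m_1,m_2$ share a parabolic fixed point and therefore lie in a common conjugate $gP(\K)g^{-1}\cong\mathbb Z^2$; malnormality of $P(\K)$ forces $gP(\K)g^{-1}$ to contain the unique meridian $gmg^{-1}$, so $m_1=m_2=gmg^{-1}$ and $U$ is again cyclic. Otherwise $U$ is non-elementary, $2$-generated and torsion-free, hence free of rank $2$ with $\{m_1,m_2\}$ a basis; as $U\cong F_2$ contains no $\mathbb Z^2$, all its parabolic subgroups are cyclic. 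By the Tameness Theorem (\cite{Ag} and Calegari--Gabai) $\mathbb H^3/U$ is topologically tame, so its cusps correspond bijectively to the conjugacy classes of maximal cyclic parabolic subgroups of $U$, among which $\langle m_1\rangle$ and $\langle m_2\rangle$ occur. The peripheral-generation claim is then exactly the assertion that these are the only cusps, i.e. that $U$ has no accidental parabolics.

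This last step is the main obstacle, and the only place where the $3$-bridge hypothesis is genuinely used: it restricts us to at most two generating meridians, which is precisely the range in which two-parabolic-generator Kleinian groups are tractable. Its necessity is visible homologically, since under $G(\K)\twoheadrightarrow H_1(S^3-\K)=\mathbb Z$ every meridian maps to $1$, so a product such as $m_1m_2$ maps to $2$ and cannot be conjugate to any meridian; were $m_1m_2$ (or any non-meridional word) parabolic, it would produce a cusp of $U$ destroying tameness. Thus the heart of the matter is to prove that for a hyperbolic $3$-bridge knot any two meridians generating a non-elementary group generate a free group all of whose parabolics are $U$-conjugate into the generators. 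I would attack this by first showing that $U$ is geometrically finite and then excluding accidental parabolics using the restricted geometry of $3$-bridge knots and the combinatorics of their bridge spheres (drawing on the third author's work), so that a three-dimensional analogue of the Euler-characteristic count of Lemma~\ref{L2} pins down the cusps exactly. I expect this exclusion of accidental parabolics to be by far the most delicate part of the proof.
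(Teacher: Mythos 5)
Your reduction to the hyperbolic case is sound (your Schubert-plus-geometrization route is a reasonable substitute for the paper's citation of \cite[Corollary 1.6]{BJW}), and your treatment of the cyclic and elementary cases via malnormality of $P(\K)$ matches the intended argument. But there are two genuine gaps. First, the step ``non-elementary, $2$-generated and torsion-free, hence free of rank $2$'' is false: a torsion-free non-elementary Kleinian group generated by two parabolics need not be free --- every $2$-bridge knot group is generated by two meridians and is not free, and a priori $U$ could be such a subgroup. The paper closes this hole by quoting \cite[Prop.~4.2]{BJW}, which says that a subgroup of a knot group generated by two meridians is cyclic, free of rank $2$, or the group of a $2$-bridge summand of $K$, the last possibility being excluded because $K$ is prime. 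You need some such input; discreteness and torsion-freeness alone do not give freeness.

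Second, and more seriously, the step you yourself flag as ``by far the most delicate part'' --- excluding accidental parabolics, i.e.\ a third conjugacy class of maximal parabolic subgroups of $U$ --- is exactly the content of the proof, and you do not supply it; ``I would attack this using the combinatorics of bridge spheres'' is a plan, not an argument. The paper's route is quite different from what you sketch: by Maskit--Swarup \cite{MS} a free rank-$2$ Kleinian group generated by two parabolics is geometrically finite of Schottky type with at most three conjugacy classes of maximal parabolic subgroups; a third class would produce a $\pi_1$-injective properly immersed thrice-punctured sphere (pants) in $E(K)$; by Agol's theorem \cite{Ag} such a pants is either embedded (impossible in a knot exterior in $S^3$) or forces $E(K)$ to be a Dehn filling of one boundary component of the Whitehead link exterior, and a homology computation then shows the filling slope is $\pm 1/q$, so $K$ is a twist knot and hence $2$-bridge, a contradiction. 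Note also that \cite{Ag} here is Agol's paper on immersed pants, not the Tameness Theorem you invoke; topological tameness of $\mathbb{H}^3/U$ gives you finitely many cusps but neither the bound of three nor any mechanism for ruling the third one out.
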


\begin{proof}

From \cite[Corollary 1.6]{BJW} it follows that for a prime 3-bridge knot $K \subset S^3$  is either hyperbolic or a torus knot.

In the case of a torus knot the meridional tameness follows from Lemma~\ref{lem:torus-knots}. Thus we can assume that $K$ is hyperbolic.

\smallskip We need to check that any subgroup $U\le \pi_1(K)$ that is generated by at most two meridians is tame. It follows from \cite[Prop. 4.2]{BJW} that any such subgroup  is either cyclic  or the fundamental group of a 2-bridge knot summand of $K$ or free of rank $2$. In the first case  the tameness of $U$ is trivial and the second case cannot occur as $K$ is a prime 3-bridge knot. 

\smallskip Thus we are left with the case where the knot $K$ is hyperbolic and 
the meridional subgroup $U$ is a free group generated by two parabolic elements which are meridians. In \cite{MS} a Kleinian free group $U$ generated by two parabolics is shown to be geometrically finite. 
In the course of the proof it is proved that $U$ is of Schottky type and obtained from a handlebody of genus 2 by pinching at most 3 disjoint and non parallel simple curves to a point corresponding to cusps. Each curve generates a maximal parabolic subgroup of $U$ and there are at most 3 conjugacy classes of such subroups.

\smallskip If there are only two conjugacy classes of maximal parabolic subgroups, they correspond to the two meridian generators and so the group $U$ is tame. 
This is the 4-times punctured sphere case in \cite[case 4]{MS}  .

\smallskip The group $U$ may also have 3 conjugacy classes of maximal parabolic subgroups, corresponding to two 3-times punctured spheres on the boundary of the core of the associated Kleinian 
manifold, see  \cite[case 5]{MS}. They give $\pi_1$-injective properly immersed pants in the hyperbolic knot exterior $E(K)$. Then it follows from Agol's result \cite{Ag} that such a $\pi_1$-injective properly immersed pant is either embedded or $E(K)$ can be obtained by Dehn filling one boundary component of the Whitehead link exterior. A knot exterior in $S^3$ cannot contain a properly embedded pant. Therefore one may assume that $E(K)$ is obtained by a Dehn filling of slope $p/q$ along one boundary component of the Whitehead link exterior. Then a homological computation shows that $H_1(E(K); \mathbb{Z}) \cong \mathbb{Z} \oplus \mathbb{Z}/\vert p \vert\mathbb{Z}$, so $\vert p \vert = 1$. Thus $K$ must be a twist knot, which is a 2-bridge knot. Hence this case is also impossible.\end{proof}

Agol's construction \cite{Ag} of a $\pi_1$-injective properly immersed pant in the exterior of the whitehead link shows that meridional tameness never holds for Whitehead doubles of  
non-trivial knots.The exterior of such a knot is obtained by gluing the exterior of a non-trivial knot 
to one boundary component of the Whitehead link exterior. Thus by van Kampen's theorem the $\pi_1$-injectivity of the properly immersed pant in the exterior of the whitehead link is preserved.
The image is a free subgroup generated by two meridians that is not tame as the third boundary component gives rise to another conjugacy class of peripheral elements which is generated by the square of some meridian. 
Note that for non-prime knots meridional tameness does not hold also in general. By \cite{Ag} if one summand is a twist knot (hyperbolic but also the trefoil can occur) then there might exist $\pi_1$-injective properly immersed pants with two boundary components corresponding to a meridian, and thus yielding a non-tame meridional subgroup.

%---------------bibliography----------------------------------

\end{document}